\documentclass[11pt,letterpaper]{article}
\usepackage[english]{babel}
\usepackage{amsmath}
\usepackage{amsfonts,mathrsfs}
\usepackage{amssymb}
\usepackage{verbatim}
\usepackage{graphicx}
\usepackage{color}
\usepackage{mathrsfs}

\oddsidemargin  0pt \topmargin   0pt \headheight 0pt \headsep 0pt
\textwidth   6.5in \textheight 8.5in \marginparsep 0pt
\marginparwidth 0pt
\parskip 1ex  \parindent 0ex

\numberwithin{equation}{section}
\newtheorem{theo}{Theorem}[section]

\newtheorem{prop}[theo]{Proposition}
\newtheorem{lemma}[theo]{Lemma}

\newtheorem{remarks}[theo]{Remarks}

\newtheorem{defn}[theo]{Definition}
\newenvironment{proof}[1][Proof]{\textbf{#1.} }{\ \rule{0.5em}{0.5em}}

\newcommand{\var}{{\rm Var} \mspace{1mu}}

\begin{document}

\begin{center}
{\LARGE Coupling on weighted branching trees}

\vspace{5mm}

{\large Ningyuan Chen \hspace{15mm} Mariana Olvera-Cravioto} \\
{\small Columbia University \hspace{22mm} Columbia University \phantom{hola}}

\vspace{4mm}

\begin{abstract}
This paper considers linear functions constructed on two different weighted branching processes and provides explicit bounds for their Kantorovich-Rubinstein distance in terms of couplings of their corresponding generic branching vectors. Motivated by applications to the analysis of random graphs, we also consider a variation of the weighted branching process where the generic branching vector has a different dependence structure from the usual one. By applying the bounds to sequences of weighted branching processes, we derive sufficient conditions for the convergence in the Kantorovich-Rubinstein distance of linear functions. We focus on the case where the limits are endogenous fixed points of suitable smoothing transformations.
\vspace{5mm}

\noindent {\em Keywords:} Weighted branching processes, smoothing transform, coupling, Kantorovich-Rubinstein distance, Wasserstein distance, coupling of random graphs, weak convergence.

\noindent {\em 2000 MSC:} 60J80, 60B10, 60H25

\end{abstract}

\end{center}

\section{Introduction}

This paper studies one particular solution of the linear stochastic fixed-point equation (SFPE)
\begin{equation} \label{eq:SFPE}
R \stackrel{\mathcal{D}}{=}   \sum_{i=1}^N C_i R_i+Q,
\end{equation}
where $(Q, N, C_1, C_2, \dots)$ is a real-valued vector with $N \in \mathbb{N} \cup \{\infty\}$, and $\{R_i\}_{i \in \mathbb{N}}$ are i.i.d.~random variables having the same distribution as $R$. This distributional equation appears in the probabilistic analysis of algorithms and has been studied in been studied in \cite{Rosler_91, Ros_Rus_01, Fill_Jan_01, Jel_Olv_10, Chen_Litv_Olv_14}, and its homogeneous version $(Q \equiv 0)$  has been studied extensively in the literature of weighted branching processes and multiplicative cascades (see, e.g.,  \cite{Biggins_77, Liu_98, Liu_00, Iksanov_04, Als_Big_Mei_10} and the references therein).

Although it is well known that \eqref{eq:SFPE} has multiple solutions \cite{Als_Big_Mei_10, Alsm_Mein_10a, Alsm_Mein_10b, Iks_Mei_14}, it is often the case that in applications (e.g., \cite{Rosler_91,  Fill_Jan_01, Chen_Litv_Olv_14}) only one of them is relevant. More precisely, we are usually interested in the solution obtained by iterating the SFPE starting from a well-behaved initial condition, which can be explicitly constructed on a weighted branching process (as explained in Section \ref{S.Preliminaries}) and hence is often referred to as a special endogenous solution. We refer the interested reader to \cite{Als_Big_Mei_10, Alsm_Mein_10a, Alsm_Mein_10b, Iks_Mei_14} for a more thorough discussion on the existence of multiple solutions to \eqref{eq:SFPE} and the role of the endogenous solution(s) in their characterization; in particular, the recent work in \cite{Iks_Mei_14} treats the case of real-valued weights $\{C_i\}$ where there may be multiple endogenous solutions.  The focus of this paper is to analyze the ``most attractive" endogenous solution mentioned above both for $Q \equiv 0$ and $P(Q \neq 0) > 0$. In particular, we consider two different weighted branching processes and compare their corresponding special endogenous solutions in the Kantorovich-Rubinstein distance, $d_1$, also known as the Wasserstein distance of order one (see, e.g., \cite{Villani_2009}). Although convergence to the (unique) endogenous solution to \eqref{eq:SFPE} for nonnegative weights has been previously studied in the context of the analysis of divide and conquer algorithms using the Wasserstein distance of order two  \cite{Rosler_91, Ros_Rus_01, Neininger_01}, recent applications to the analysis of information ranking algorithms \cite{Volk_Litv_08, Jel_Olv_10, Chen_Litv_Olv_14} where the variance of $R$ might be infinite, suggest the use of the weaker Kantorovich-Rubinstein distance.

Moreover, motivated by the same applications to the analysis of information ranking algorithms mentioned above, we analyze a variation of the weighted branching process constructed using a generic branching vector of the form $(Q, N, C)$, i.e., where the weight $C$ of a node in the tree is allowed to depend on the node's copy of $(Q, N)$, rather than on its {\em parent's} copy as is the case with a generic branching vector of the form $(Q, N, C_1, C_2, \dots)$. To avoid confusion, we will refer to this variation as a weighted branching tree. Using this weighted branching tree we mimic the construction of the special endogenous solution and provide conditions under which it will be close, in the Kantorovich-Rubinstein distance, to the special endogenous solution to \eqref{eq:SFPE} constructed on a usual weighted branching process.

The main results in this paper provide explicit bounds for the Kantorovich-Rubinstein distance between two random variables constructed according to the representation for the special endogenous solution to \eqref{eq:SFPE}; these bounds are given in terms of the Kantorovich-Rubinstein distance between their generic branching vectors. We then use these bounds to obtain the convergence of a sequence of such random variables in the same distance. We illustrate the main results with applications to the analysis of random graphs and information ranking algorithms.

The remainder of the paper is organized as follows. Section \ref{S.Preliminaries} describes the weighted branching process and its variation, the weighted branching tree. Section \ref{S.Wasserstein} contains a brief exposition of the Kantorovich-Rubinstein distance and some of its main properties. Section \ref{S.Main} contains our main results, with the explicit bounds for the Kantorovich-Rubinstein distance given in Section \ref{S.Coupling}. Finally, Section \ref{S.Applications} contains applications of the main results to the analysis of random graphs and information ranking algorithms. All the proofs in the paper are postponed until Section \ref{S.Proofs}.

\section{Weighted branching processes} \label{S.Preliminaries}

In order to define a weighted branching process we start by letting $\mathbb{N}_+ = \{1, 2, 3, \dots\}$ be the set of positive integers and setting $U = \bigcup_{k=0}^\infty (\mathbb{N}_+)^k$ to be the set of all finite sequences ${\bf i} = (i_1, i_2, \dots, i_n)$, $n\ge 0$, where by convention $\mathbb{N}_+^0 = \{ \emptyset\}$ contains the null sequence $\emptyset$. To ease the exposition, for a sequence ${\bf i} = (i_1, i_2, \dots, i_k) \in U$ we write ${\bf i}|n = (i_1, i_2, \dots, i_n)$, provided $k \geq n$, and ${\bf i}|0 = \emptyset$ to denote the index truncation at level $n$, $n \geq 0$. Also, for ${\bf i} \in A_1$ we simply use the notation ${\bf i} = i_1$, that is, without the parenthesis. Similarly, for ${\bf i} = (i_1, \dots, i_n)$ we will use $({\bf i}, j) = (i_1,\dots, i_n, j)$ to denote the index concatenation operation, if ${\bf i} = \emptyset$, then $({\bf i}, j) = j$.

Next, let $(Q, N, C_1, C_2, \dots)$ be a real-valued vector with $N \in \mathbb{N} \cup \{\infty\}$. We will refer to this vector as the generic branching vector. Now let $\{ (Q_{\bf i}, N_{\bf i}, C_{({\bf i}, 1)}, C_{({\bf i}, 2)}, \dots ) \}_{{\bf i} \in U}$ be a sequence of i.i.d.~copies of the generic branching vector. To construct a weighted branching process we start by defining a tree as follows: let $A_0 = \{ \emptyset\}$ denote the root of the tree, and define the $n$th generation according to the recursion
$$A_n = \{ ({\bf i}, i_n) \in U: {\bf i} \in A_{n-1}, 1 \leq i_n \leq N_{\bf i} \}, \quad n \geq 1.$$
Now, assign to each node ${\bf i}$ in the tree a weight $\Pi_{\bf i}$ according to the recursion
$$\Pi_\emptyset \equiv 1, \qquad \Pi_{({\bf i}, i_n)} = C_{({\bf i}, i_n)} \Pi_{\bf i}, \qquad n \geq 1, $$
see Figure \ref{F.Tree}. Note that the tree's structure, disregarding the weights, is that of a Galton-Watson process with offspring distribution $f(k) = P(N = k)$, provided $P(N < \infty) = 1$.

\begin{figure}[t]
\centering
\begin{picture}(350,110)(0,0)
\put(0,0){\includegraphics[scale = 0.75, bb = 30 560 510 695, clip]{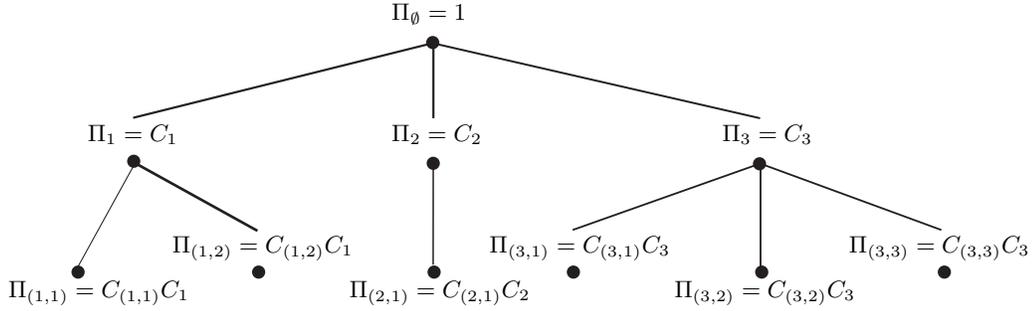}}
\put(145,105){\footnotesize $\Pi_\emptyset = 1$}
\put(30,59){\footnotesize $\Pi_{1} = C_1$}
\put(145,59){\footnotesize $\Pi_{2} = C_2$}
\put(270,59){\footnotesize $\Pi_{3} = C_3$}
\put(0,0){\footnotesize $\Pi_{(1,1)} = C_{(1,1)} C_1$}
\put(62,17){\footnotesize $\Pi_{(1,2)} = C_{(1,2)} C_1$}
\put(129,0){\footnotesize $\Pi_{(2,1)} = C_{(2,1)} C_2$}
\put(182,17){\footnotesize $\Pi_{(3,1)} = C_{(3,1)} C_3$}
\put(252,0){\footnotesize $\Pi_{(3,2)} = C_{(3,2)} C_3$}
\put(318,17){\footnotesize $\Pi_{(3,3)} = C_{(3,3)} C_3$}
\end{picture}
\caption{Weighted branching process}\label{F.Tree}
\end{figure}

Using the same notation described above, consider now constructing this process using a generic branching vector of the form $(Q, N, C)$, with $N \in \mathbb{N}$, and a sequence of i.i.d.~copies $\{(Q_{\bf i}, N_{\bf i}, C_{\bf i})\}_{{\bf i} \in U}$. As mentioned earlier, we will refer to this construction as a weighted branching tree. The difference lies in the dependence structure that now governs the nodes in the tree, since whereas in a usual weighted branching process the weight $C_{\bf i}$ of node ${\bf i}$ is independent of $(Q_{\bf i}, N_{\bf i})$, in a weighted branching tree it may not be. Another important observation is that in a weighted branching tree the weights $\{C_{\bf i}\}_{{\bf i} \in U}$ are i.i.d.~random variables, unlike in a weighted branching process where the weights of ``sibling" nodes are arbitrarily dependent and not necessarily identically distributed. It follows from these observations that when $C$ is independent of $(Q,N)$, the corresponding weighted branching tree is a special case of a weighted branching process.

We will now explain how to construct the special endogenous solution to the linear SFPE \eqref{eq:SFPE} using a weighted branching process.

\subsection{The special endogenous solution to the linear SFPE} \label{S.EndogenousSol}

For a weighted branching process with generic branching vector $(Q, N, C_1, C_2, \dots)$, define the processes $\{ W^{(j)}: j \geq 0\}$ and $\{ R^{(k)} : k \geq 0\}$ as follows:
\begin{align}
W^{(0)} &= Q_0, \qquad W^{(j)} = \sum_{{\bf i} \in A_j} Q_{\bf i} \Pi_{\bf i}, \quad j \geq 1, \label{eq:Wprocess} \\
R^{(k)} &= \sum_{j=0}^k W^{(j)} = \sum_{j=0}^k \sum_{{\bf i} \in A_j} Q_{\bf i} \Pi_{\bf i}, \quad k \geq 0. \label{eq:Rprocess}
\end{align}

By focusing on the branching vector belonging to the root node, i.e., $(Q_\emptyset, N_\emptyset, C_1, C_2, \dots)$ we can see that the processes $\{W^{(j)}\}$ and $\{ R^{(k)} \}$ satisfy the distributional equations
\begin{equation} \label{eq:Homogeneous}
W^{(j)} = \sum_{r=1}^{N_\emptyset} C_r \left( \sum_{(r,{\bf i}) \in A_j} Q_{(r,{\bf i})} \Pi_{(r,{\bf i})}/ C_r   \right) \stackrel{\mathcal{D}}{=} \sum_{r=1}^{N} C_r W^{(j-1)}_r, \qquad j \geq 1,
\end{equation}
and
\begin{equation} \label{eq:NonHomogeneous}
R^{(k)} = \sum_{r=1}^{N_\emptyset} C_r \left( \sum_{j=1}^k \sum_{(r,{\bf i}) \in A_j} Q_{(r,{\bf i})} \Pi_{(r,{\bf i})}/ C_r  \right) + Q_\emptyset \stackrel{\mathcal{D}}{=} \sum_{r=1}^{N} C_r R^{(k-1)}_r + Q, \qquad k \geq 1,
\end{equation}
where $W^{(j-1)}_r$ are i.i.d.~copies of $W^{(j-1)}$ and $R^{(k-1)}_r$ are i.i.d.~copies of $R^{(k-1)}$, all independent of $(Q, N, C_1, C_2, \dots)$. Here and throughout the paper the convention is that $\Pi_{(r,{\bf i})}/C_r \equiv 1$ if $C_r = 0$.

For the homogeneous case ($Q \equiv 0$ in \eqref{eq:SFPE}), assume the weights $\{C_i\}$ are nonnegative and redefine the $\{W^{(j)}\}$ process as
$$W^{(0)} = 1, \qquad W^{(j)} = \sum_{{\bf i} \in A_j} \Pi_{\bf i}, \quad j \geq 1.$$
In this case, and provided $\rho = E[ \sum_{i=1}^N C_i ] < \infty$, the process $M^{(j)} = W^{(j)}/ \rho^j$, $j \geq 0$, defines a nonnegative martingale. It follows that $M^{(j)}$ converges almost surely, as $j \to \infty$, to a finite limit $W$ with $E[ W ] \leq 1$. Taking the limit as $j \to \infty$ in \eqref{eq:Homogeneous} then gives that $W$ satisfies
$$W \stackrel{\mathcal{D}}{=} \sum_{r=1}^N \frac{C_r}{\rho} \, W_r \triangleq \sum_{r=1}^N C_r' \, W_r,$$
where the $\{W_r\}$ are i.i.d.~copies of $W$, independent of $(N, C_1, C_2, \dots)$. Hence, $W$ is a solution to the homogeneous version of \eqref{eq:SFPE} with the generic branching vector $(N, C_1', C_2', \dots)$.

For the nonhomogeneous case $(P(Q \neq 0) > 0$), one can argue, as was done in \cite{Jel_Olv_12b}, that provided $E[ \sum_{i=1}^N |C_i|^\beta ] < 1$ and $E[|Q|^\beta] < \infty$ for some $0 < \beta \leq 1$, then the random variable $R^{(k)}$ converges almost surely, as $k \to \infty$, to a finite limit $R$. Taking the limit as $k \to \infty$ in \eqref{eq:NonHomogeneous} gives that $R$ is a solution to \eqref{eq:SFPE}. We refer to the random variables $W$ and $R$ described above as the special endogenous solutions to \eqref{eq:SFPE} in the homogeneous and nonhomogeneous cases, respectively. We point out that in the case of nonnegative weights, $W$ and $R$ are the unique endogenous solutions to \eqref{eq:SFPE}, whereas in the real-valued case there can be other endogenous solutions, i.e., that can be explicitly constructed using a weighted branching process (see \cite{Iks_Mei_14} for more details).

\section{The Kantorovich-Rubinstein distance} \label{S.Wasserstein}

Before proceeding to the main results in the paper we give a brief description of the Kantorovich-Rubinstein distance. This distance on the space of probability measures is also known as the minimal $l_1$ metric or the Wasserstein distance of order one. For the purposes of this paper, we consider the vector space of infinite real sequences $\mathbb{R}^\infty$ having finite $l_1$ norm, i.e., ${\bf x} \in \mathbb{R}^\infty$ such that
$$\| {\bf x} \|_1 = \sum_{i=1}^\infty |x_i| < \infty.$$
On some occasions, which will become clear from the context, we will work with elements of $\mathbb{R}^d$ instead. The norm $\| {\bf x} \|_1$ will always refer to the corresponding $l_1$ norm.

\begin{defn} \label{D.Wasserstein}
    Let $M(\mu, \nu)$ denote the set of joint probability measures on $\mathcal S\times \mathcal S$ ($\mathcal S=\mathbb R^d$ or $\mathbb R^{\infty}$) with marginals $\mu$ and $\nu$. Then, the Kantorovich-Rubinstein distance between $\mu$ and $\nu$ is given by
$$d_1(\mu, \nu) = \inf_{\pi \in M(\mu, \nu)} \int_{\mathcal S\times \mathcal S} \| {\bf x} - {\bf y} \|_1 \, d \pi({\bf x}, {\bf y}).$$
\end{defn}

We point out that $d_1$ is only strictly speaking a distance when restricted to the subset of probability measures
$$\mathscr{P}_1(\mathcal{S}) \triangleq \left\{ \mu \in \mathscr{P}(\mathcal S): \int_{\mathcal S} \| {\bf x} \|_1 \, d\mu( {\bf x}) < \infty \right\},$$
where $\mathscr{P}(\mathcal S)$ is the set of Borel probability measures on $\mathcal S$. We refer the interested reader to \cite{Villani_2009} for a thorough treatment of this distance, since Definition \ref{D.Wasserstein} is only a special case.

Any construction on the same probability space of the joint vector $({\bf X}, {\bf Y})$, where ${\bf X}$ has distribution $\mu$ and ${\bf Y}$ has distribution $\nu$, is called a {\em coupling} of $\mu$ and $\nu$. In this notation we can rewrite $d_1$ as
$$d_1(\mu, \nu) = \inf_{{\bf X}, {\bf Y}} E\left[ \| {\bf X} - {\bf Y} \|_1 \right],$$
where the infimum is taken over all couplings of $\mu$ and $\nu$.

It is well known that $d_1$ is a metric on $\mathscr{P}_1(\mathcal{S})$ and that the infimum is attained, or equivalently, that an optimal coupling $({\bf X}, {\bf Y})$ such that
$$d_1(\mu, \nu) = E\left[ \| {\bf X} - {\bf Y} \|_1 \right]$$
always exists (see, e.g., \cite{Villani_2009}, Theorem 4.1 or \cite{Bick_Free_81}, Lemma 8.1). This optimal coupling, nonetheless, is not in general explicitly available. One noteworthy exception is when $\mu$ and $\nu$ are probability measures on the real line, in which case we have that
$$d_1(\mu, \nu) = \int_{0}^1 | F^{-1}(u) - G^{-1}(u) | du = \int_{-\infty}^\infty | F(x) - G(x) | dx,$$
where $F$ and $G$ are the cumulative distribution functions of $\mu$ and $\nu$, respectively, and $f^{-1}(t) = \inf\{ x \in \mathbb{R}: f(x) \geq t\}$ denotes the pseudo-inverse of $f$. It follows that the optimal coupling is given by $(X, Y) = (F^{-1}(U), G^{-1}(U))$ for $U$ uniformly distributed in $[0, 1]$.

Another important property of the Kantorovich-Rubinstein distance is that the convergence in $d_1$ to a limit $\mu \in \mathscr{P}_1(\mathcal{S})$ is equivalent to weak convergence plus convergence of the first moments. Furthermore, it satisfies the useful {\bf duality formula}:
$$d_1(\mu, \nu) = \sup_{\| \psi \|_{\text{Lip}} \leq 1} \left\{ \int_{\mathcal S} \psi({\bf x}) d\mu({\bf x)} - \int_{\mathcal S} \psi({\bf x}) d\nu({\bf x)} \right\}$$
for all $\mu, \nu \in \mathscr{P}_1(\mathcal{S})$, where the supremum is taken over all Lipschitz continuous functions $\psi: \mathcal S \to \mathbb{R}$ with Lipschitz constant one (see Remark 6.5 in \cite{Villani_2009}).

\section{Main Results}\label{S.Main}

The paper contains two sets of results; the first one provides explicit bounds for the Kantorovich-Rubinstein distance between two versions of the processes $\{W^{(j)}: j \geq 0\}$ (as defined by \eqref{eq:Wprocess}) constructed on weighted branching processes, respectively weighted branching trees, using different generic branching vectors. These bounds are given in terms of the Kantorovich-Rubinstein distance between the two generic branching vectors. The second set of results apply the explicit bounds to a sequence of processes $\{ W^{(n,j)}: j \geq 0\}$ and $\{ R^{(n,k)}: k \geq 0\}$ for $n\geq 1$, to obtain the convergence in the Kantorovich-Rubinstein distance to the special endogenous solution to \eqref{eq:SFPE} constructed on a limiting weighted branching process.

\subsection{Bounds for the Kantorovich-Rubinstein distance} \label{S.Coupling}

Let $\{W^{(j)}: j \geq 0\}$ and $\{ \hat W^{(j)}: j \geq 0\}$ be defined according to \eqref{eq:Wprocess} on two different weighted branching processes using the generic branching vectors $(Q, N, C_1, C_2, \dots)$ and $(\hat Q, \hat N, \hat C_1, \hat C_2, \dots)$, respectively. As our result will show, it is enough to consider generic branching vectors of the form $(Q, B_1, B_2, \dots)$ and $(\hat Q, \hat B_1, \hat B_2, \dots)$ where $B_i = C_i 1(N \geq i)$ and $\hat B_i = \hat C_i 1(\hat N \geq i)$ for all $i \in \mathbb{N}_+$. Let $\mu$ denote the probability measure of $(Q, B_1, B_2, \dots)$ and let $\hat \mu$ denote the probability measure of $(\hat Q, \hat B_1, \hat B_2, \dots)$. Using $\mathcal S=\mathbb R^{\infty}$, we assume throughout the paper that
\begin{equation} \label{eq:FiniteNorm}
    \int_{\mathbb R^{\infty}} \| {\bf x} \|_1 d\mu ({\bf x}) =E\left[ |Q|+ \sum_{i=1}^{\infty}|B_i| \right]< \infty \quad \text{and} \quad \int_{\mathbb R^{\infty}} \| {\bf x} \|_1 d\hat \mu ({\bf x})=E\left[ |\hat Q| + \sum_{i=1}^{\infty}|\hat{B}_i| \right] < \infty.
\end{equation}

To construct the two processes on the same probability space, let $\pi$ denote any coupling of $\mu$ and $\hat \mu$ and let $\{ (Q_{\bf i}, B_{({\bf i},1)}, B_{({\bf i},2)}, \dots, \hat Q_{\bf i}, \hat B_{({\bf i},1)}, \hat B_{({\bf i},2)}, \dots) \}_{{\bf i} \in U}$ be a sequence of i.i.d.~random vectors distributed according to $\pi$. Then, use the vectors $\{ (Q_{\bf i}, B_{({\bf i},1)}, B_{({\bf i},2)}, \dots) \}_{{\bf i} \in U}$ to construct $\{W^{(j)}: j \geq 0\}$, as described in Section \ref{S.Preliminaries}, and the vectors $\{ ( \hat Q_{\bf i}, \hat B_{({\bf i},1)}, \hat B_{({\bf i},2)}, \dots) \}_{{\bf i} \in U}$ to construct $\{\hat W^{(j)}: j \geq 0\}$. Our first result is stated below. We use the convention that $\sum_{i=a}^b x_i \equiv 0$ if $b < a$, and the notation $E_\pi[ \cdot]$ to denote the expectation taken with respect to the coupling $\pi$; we also use $x \wedge y$ and $x \vee y$ to denote the minimum and the maximum of $x$ and $y$, respectively, and $x^+ = \max\{0, x\}$.

\begin{prop} \label{P.CouplingWBP}
For any coupling $\pi$ of $\mu$ and $\hat \mu$, and any $j \geq 0$,
\begin{equation*}
    E\left[ \left| \hat W^{(j)} - W^{(j)} \right| \right]\le\left(\hat\rho^{j} +  E[|Q|] \sum_{t=0}^{j-1}\rho^t\hat\rho^{j-1-t}\right) \mathcal{E},
\end{equation*}
where $\rho = E[ \sum_{i=1}^N |C_i| ]$, $\hat \rho = E[ \sum_{i=1}^{\hat N} |\hat C_i| ]$ and $\mathcal{E} = E_\pi [ |\hat Q - Q| + \sum_{i=1}^\infty |\hat B_i - B_i| ].$
\end{prop}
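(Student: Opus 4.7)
The plan is to prove the bound by induction on $j$. For the base case $j=0$ we have $W^{(0)}=Q_\emptyset$ and $\hat W^{(0)}=\hat Q_\emptyset$, so under the coupling $\pi$ applied at the root,
\begin{equation*}
E\bigl[|\hat W^{(0)}-W^{(0)}|\bigr]=E_\pi\bigl[|\hat Q-Q|\bigr]\le \mathcal{E},
\end{equation*}
which agrees with the claimed bound under the convention $\sum_{t=0}^{-1}=0$.

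For the inductive step, I would exploit the one-step decomposition inherited from \eqref{eq:Homogeneous}: on the shared probability space,
\begin{equation*}
W^{(j)}=\sum_{r=1}^{\infty} B_r\, W_r^{(j-1)},\qquad \hat W^{(j)}=\sum_{r=1}^{\infty}\hat B_r\,\hat W_r^{(j-1)},
\end{equation*}
where $W_r^{(j-1)}$ (resp.\ $\hat W_r^{(j-1)}$) is built from the subtree rooted at node $r$ of the unhatted (resp.\ hatted) process. By the i.i.d.~construction, the pair $(W_r^{(j-1)},\hat W_r^{(j-1)})$ has the same joint law as $(W^{(j-1)},\hat W^{(j-1)})$, these pairs are i.i.d.~across $r$, and the whole collection is independent of the root vector $(Q,\hat Q,B_1,\hat B_1,B_2,\hat B_2,\ldots)$ drawn from $\pi$. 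I would then use the asymmetric split
\begin{equation*}
\hat W^{(j)}-W^{(j)}=\sum_{r=1}^{\infty}(\hat B_r-B_r)\,W_r^{(j-1)}+\sum_{r=1}^{\infty}\hat B_r\,\bigl(\hat W_r^{(j-1)}-W_r^{(j-1)}\bigr),
\end{equation*}
pairing $\hat B_r-B_r$ with the \emph{unhatted} $W_r^{(j-1)}$ and $\hat B_r$ with the inductive difference. This pairing is the crucial choice.

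Taking absolute values and invoking the independence above, each cross-term factorizes, giving
\begin{equation*}
E\bigl[|\hat W^{(j)}-W^{(j)}|\bigr]\le E_\pi\!\left[\sum_{r=1}^{\infty}|\hat B_r-B_r|\right]E\bigl[|W^{(j-1)}|\bigr]+\hat\rho\,E\bigl[|\hat W^{(j-1)}-W^{(j-1)}|\bigr].
\end{equation*}
An easy auxiliary bound, obtained by expanding $W^{(j-1)}=\sum_{\mathbf{i}\in A_{j-1}}Q_{\mathbf{i}}\Pi_{\mathbf{i}}$, taking absolute values, and using that $Q_{\mathbf{i}}$ is independent of both $\Pi_{\mathbf{i}}$ and $\{\mathbf{i}\in A_{j-1}\}$, yields $E[|W^{(j-1)}|]\le E[|Q|]\rho^{j-1}$; and by the definition of $\mathcal{E}$, $E_\pi[\sum_r|\hat B_r-B_r|]\le\mathcal{E}$. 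Substituting the inductive hypothesis for $E[|\hat W^{(j-1)}-W^{(j-1)}|]$ and then absorbing the extra $E[|Q|]\rho^{j-1}\mathcal{E}$ into the telescoping sum as its $t=j-1$ term recovers the stated expression $\bigl(\hat\rho^{j}+E[|Q|]\sum_{t=0}^{j-1}\rho^t\hat\rho^{j-1-t}\bigr)\mathcal{E}$.

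I expect the only genuine subtlety to be choosing the right split: the symmetric alternative (pairing $\hat B_r-B_r$ with $\hat W_r^{(j-1)}$ and $B_r$ with the inductive difference) yields $E[|\hat Q|]\hat\rho^{j-1}$ in place of $E[|Q|]\rho^{j-1}$ and $\rho\hat\rho^{j-1}$ in place of $\hat\rho^{j}$, which fails to close the induction against the stated bound. The interchange of infinite sums and expectations is routine, justified by the finite-norm assumption \eqref{eq:FiniteNorm} together with the independence of branching vectors across distinct nodes, which makes the relevant series absolutely summable in mean.
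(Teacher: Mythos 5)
Your proposal is correct and follows essentially the same route as the paper: the same one-step decomposition $\hat W^{(j)}=\sum_r \hat B_r \hat W^{(j-1)}_r$, the same asymmetric split pairing $\hat B_r-B_r$ with the unhatted $W^{(j-1)}_r$ and $\hat B_r$ with the inductive difference, the same auxiliary bound $E[|W^{(j-1)}|]\le E[|Q|]\rho^{j-1}$, and the same iteration of the resulting recursion. No gaps.
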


We point out that the bound provided by Proposition \ref{P.CouplingWBP} is also a bound for the Kantorovich-Rubinstein distance between $\hat W^{(j)}$ and $W^{(j)}$, and if we take $\pi$ to be the optimal coupling of $\mu$ and $\hat \mu$ then we have $\mathcal{E} = d_1(\hat\mu, \mu)$. It is also worth mentioning that if we let $\nu$ and $\hat \nu$ be the probability measures of $(Q, N, C_1, C_2, \dots)$ and $(\hat Q, \hat N, \hat C_1, \hat C_2, \dots)$, respectively, and assume that $E[N + \hat N] < \infty$, then $d_1(\mu, \hat \mu)$ can be small even if $d_1(\nu, \hat \nu)$ is not. This is due to the observation that, in general, large disagreements between $C_r$ and $\hat C_r$ for values of $r$ for which $P(N > r)$ and $P(\hat N > r)$ are negligible do not affect $d_1(\mu, \hat \mu)$, whereas they do adversely affect $d_1(\nu, \hat \nu)$.

Our next result provides a similar bound for the case when $\hat W^{(j)}$ and $W^{(j)}$ are constructed on weighted branching trees using the generic branching vectors $(\hat Q, \hat N, \hat C)$ and $(Q, N, C)$, respectively. As before, let $\hat \nu$ and $\nu$ denote the probability measures of $(\hat Q, \hat N, \hat C)$ and $(Q, N, C)$. We allow the coupling used for the root nodes to be different than all other nodes, i.e., the two trees are constructed using the sequence of i.i.d.~vectors $\{ (Q_{\bf i}, C_{\bf i}, N_{\bf i}, \hat Q_{\bf i}, \hat C_{\bf i}, \hat N_{\bf i})\}_{{\bf i} \in U, {\bf i} \neq \emptyset}$ distributed according to a coupling $\pi$ of $\nu$ and $\hat \nu$, while $(Q_{\emptyset}, N_{\emptyset}, \hat Q_{\emptyset}, \hat N_{\emptyset})$ is independent of the previous sequence and is distributed according to a coupling $\pi^*$ of $\nu^*$ and $\hat \nu^*$, where $\nu^*$ is the probability measure of $(Q, N)$ and $\hat \nu^*$ is that of $(\hat Q, \hat N)$. We have ignored $C_\emptyset$ and $\hat C_{\emptyset}$ since they do not appear in the definitions of $W^{(j)}$ and $\hat W^{(j)}$.

\begin{prop} \label{P.CouplingWBT}
For any coupling $\pi$ of $\nu$ and $\hat \nu$ and any coupling $\pi^*$ of $\nu^*$ and $\hat \nu^*$,
$${E} \left[ \left| \hat W^{(0)} - W^{(0)} \right| \right] \leq  \mathcal{E}^*$$
and for $j \geq 1$,
$${E} \left[ \left| \hat W^{(j)} - W^{(j)} \right| \right] \leq  \left( E[\hat N] \vee \frac{E[N] E[|CQ|]}{\rho} \right) \left( \sum_{t=0}^{j-1} \hat \rho^t \rho^{j-1-t} \right) \mathcal{E} + E[|Q|] \hat \rho^{j-1} \mathcal{E}^*,$$
where $\rho = E[ N |C|]$, $\hat \rho = E[ \hat N |\hat C|]$,
\begin{align*}
\mathcal{E}^* &= E_{\pi^*} \left[ | \hat Q - Q| + |\hat N - N| \right] \quad \text{and} \quad \mathcal{E} = E_\pi \left[ |\hat C \hat Q - CQ| +  \sum_{i=1}^\infty |\hat C 1(\hat N \geq i) - C 1 (N \geq i) | \right] .
\end{align*}
\end{prop}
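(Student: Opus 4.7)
The plan is to prove the bound by induction on $j$. For the base case $j = 0$ the identities $W^{(0)} = Q_\emptyset$ and $\hat W^{(0)} = \hat Q_\emptyset$, together with the fact that $(Q_\emptyset, \hat Q_\emptyset)$ is drawn from $\pi^*$, give $E[|\hat W^{(0)} - W^{(0)}|] = E_{\pi^*}[|\hat Q - Q|] \le \mathcal{E}^*$ directly.

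For $j \ge 1$ I would start from the recursive identities
\[
W^{(j)} = \sum_{r=1}^{N_\emptyset} C_r \widetilde W_r^{(j-1)}, \qquad \hat W^{(j)} = \sum_{r=1}^{\hat N_\emptyset} \hat C_r \widetilde{\hat W}_r^{(j-1)},
\]
where $\widetilde W_r^{(j-1)}$ and $\widetilde{\hat W}_r^{(j-1)}$ are the analogues of $W^{(j-1)}, \hat W^{(j-1)}$ built on the subtree rooted at child $r$. The crucial structural observation is that, since all non-root triples are drawn from $\pi$, the pairs $X_r := C_r \widetilde W_r^{(j-1)}$ and $\hat X_r := \hat C_r \widetilde{\hat W}_r^{(j-1)}$ are i.i.d.\ in $r$ and jointly independent of $(N_\emptyset, \hat N_\emptyset) \sim \pi^*$. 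The telescoping identity
\[
\hat W^{(j)} - W^{(j)} = \sum_{r \ge 1} 1(r \le \hat N_\emptyset)(\hat X_r - X_r) + \sum_{r \ge 1}\bigl[1(r \le \hat N_\emptyset) - 1(r \le N_\emptyset)\bigr] X_r,
\]
together with the symmetric variant obtained by swapping $N$ and $\hat N$, then yields after taking absolute values and expectations a one-step estimate of the form $E[\hat N]\,u_j + E_{\pi^*}[|\hat N - N|]\,E[|X|]$ (or the analogous estimate with $E[N]$ and $E[|\hat X|]$), where $u_j := E[|\hat X - X|]$ is the subtree analogue computed under $\pi$ throughout. Retaining, at each step, the bound that is valid in both symmetric splits is what produces the maximum $E[\hat N] \vee (E[N]\,E[|CQ|]/\rho)$ in front of $\mathcal{E}$ and the prefactor $E[|Q|]$ in front of $\mathcal{E}^*$.

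Closing the recursion rests on two secondary estimates. First, to bound $E[|X|] = E[|C\widetilde W^{(j-1)}|]$, I would condition on $N$ at the subtree root and use conditional independence between $C$ and everything strictly below $N$; a Wald-type identity then gives $E[|C\widetilde W^{(j-1)}|] \le \rho \, E[|C\widetilde W^{(j-2)}|]$ with base $E[|CQ|]$, whence $E[|X|] \le \rho^{j-1} E[|CQ|]$. Second, $u_j$ itself satisfies an inequality of the same shape as $a_j := E[|\hat W^{(j)}-W^{(j)}|]$, but with $\pi$ at every level and the ``local'' coupling quantity $\mathcal{E}$ playing the role of $\mathcal{E}^*$; iterating it yields the characteristic convolution $\sum_{t=0}^{j-1}\hat\rho^t\rho^{j-1-t}$. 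Substituting these into the one-step bound and inducting on $j$ produces the claimed estimate.

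The main obstacle is the careful bookkeeping of the dependencies inside each triple $(Q_r, N_r, C_r)$: because $C_r$ is correlated with $N_r$ and $\widetilde W_r^{(j-1)}$ depends on $N_r$ and on the subtree below $r$, the expectation $E[|C_r \widetilde W_r^{(j-1)}|]$ cannot simply be factored and one must condition on $N_r$ at every level. A second delicate point is that the two symmetric decompositions above give bounds that are optimal in complementary regimes; producing a single bound with coefficient $E[\hat N] \vee (E[N]E[|CQ|]/\rho)$ requires keeping the less favourable of the two so that the estimate holds uniformly. I also anticipate some work to relate cross-terms of the form $E_\pi[|\hat C|\,|\hat N - N|]$ and $E_\pi[|\hat C - C|\,N]$ that arise when expanding the subtree recursion back to the components of $\mathcal{E}$, which has been designed precisely so that such products can be controlled.
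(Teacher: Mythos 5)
Your strategy is essentially the paper's, recast as a nested recursion rather than as a single sum over paths. The paper's key device is exactly yours: rewrite $\Pi_{\bf i}Q_{\bf i}=1(i_1\le N_\emptyset)\prod_{r=1}^{j-1}B^{(i_{r+1})}_{{\bf i}|r}\,B^{(0)}_{\bf i}$ with $B^{(i)}_{\bf j}=C_{\bf j}1(N_{\bf j}\ge i)$ and $B^{(0)}_{\bf j}=C_{\bf j}Q_{\bf j}$, so that each node's $C$ is paired with its \emph{own} $N$ through the child indicators and distinct factors are independent; it then telescopes and iterates a recursion $a_j\le\hat\rho\,a_{j-1}+\rho^{j-2}\sum_i E_\pi[|\hat B^{(i)}-B^{(i)}|]$, which is precisely your $u_j$ recursion. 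Your secondary estimate $E[|C\widetilde W^{(j-1)}|]\le\rho^{j-1}E[|CQ|]$ is also the one the paper uses. So the architecture is sound and the base case is handled correctly.

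The one place your outline does not quite deliver the stated inequality is the bookkeeping of constants, and the culprit is the two-way split at the root. The paper telescopes the path product in \emph{three} pieces — the $\hat B^{(0)}-B^{(0)}$ difference multiplied by all-hatted remaining factors, the root-indicator difference multiplied by all-hatted intermediates and an unhatted $B^{(0)}$, and the intermediate-product difference multiplied by unhatted $1(i_1\le N_\emptyset)$ and $B^{(0)}$ — which is what produces the coefficients $E[\hat N]\hat\rho^{j-1}$, $E[|CQ|]\hat\rho^{j-1}$ and $E[N]E[|CQ|]$ on the three terms separately. Your decomposition $\sum_r 1(r\le\hat N_\emptyset)(\hat X_r-X_r)+\sum_r[1(r\le\hat N_\emptyset)-1(r\le N_\emptyset)]X_r$ forces a single factor ($E[\hat N]$, or $E[N]$ in the swapped variant) onto the \emph{whole} of $u_j$, so after substituting the bound for $u_j$ you get $E[\hat N]\cdot E[|CQ|]/\rho$ (resp.\ $E[N]\cdot E[|CQ|]/\rho$) in front of the convolution, and $\rho^{j-1}E[|CQ|]$ (resp.\ $\hat\rho^{j-1}E[|\hat C\hat Q|]$) in front of $\mathcal E^*$; neither split is dominated termwise by $\bigl(E[\hat N]\vee E[N]E[|CQ|]/\rho\bigr)$ together with $E[|Q|]\hat\rho^{j-1}$. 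Moreover, ``retaining the bound valid in both symmetric splits'' is not a licensed operation: the $\mathcal E$-coefficient and the $\mathcal E^*$-coefficient are tied to the same split, so you may not mix the better half of each. None of this is fatal — your bound has the same form and serves the same purpose downstream (and the paper itself ends its proof with $E[|CQ|]\hat\rho^{j-1}\mathcal E^*$ while stating $E[|Q|]\hat\rho^{j-1}\mathcal E^*$) — but to reproduce the proposition as written you should refine the root step into the paper's three-way telescoping rather than appeal to symmetry.
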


\subsection{Convergence to the special endogenous solution} \label{S.Convergence}

Our second set of results considers a sequence of weighted branching processes (respectively, weighted branching trees), each constructed using a generic branching vector having probability measure $\nu_n$, $n \geq 1$. In other words, for weighted branching processes, $\nu_n$ is the probability measure of a vector of the form $(Q^{(n)}, N^{(n)}, C^{(n)}_1, C^{(n)}_2, \dots)$, while for weighted branching trees it corresponds to a vector of the form $(Q^{(n)}, N^{(n)}, C^{(n)})$. On each of them we define the processes $\{ W^{(n,j)}: j \geq 0\}$ and $\{ R^{(n,k)}: k \geq 0\}$ according to \eqref{eq:Wprocess} and \eqref{eq:Rprocess}, and we are interested in providing conditions under which $W^{(n,j)}$ (suitably scaled) and $R^{(n,k)}$ will converge, as $n, j, k$ go to infinity, to the special endogenous solution of a linear SFPE of the form in \eqref{eq:SFPE}.

The main conditions for the convergence we seek will be in terms of the sequence of probability measures $\{\mu_n\}_{n \geq 1}$, where $\mu_n$ is the probability measure of the vector
$$(Q^{(n)}, C_1^{(n)} 1(N^{(n)} \geq 1),  C_2^{(n)} 1(N^{(n)} \geq 2), \dots)$$
for weighted branching processes, and of
$$(C^{(n)} Q^{(n)}, C^{(n)} 1(N^{(n)} \geq 1),  C^{(n)} 1(N^{(n)} \geq 2), \dots)$$
for weighted branching trees.

In both cases, we assume that there exists a probability measure $\mu$ such that $d_1(\mu_n, \mu) \to 0$. We point out that for a weighted branching process, $\mu$ is always the probability measure of a generic branching vector, since each of the $\mu_n$ is. However, this is not necessarily the case for a weighted branching tree, and we need to further assume that there exist probability measures $\eta_1$ on $\mathbb{R}$ and $\eta_2$ on $\mathbb{R} \times \{0,1\}^\infty$ such that
$$\int_{\mathbb{R}^\infty} h({\bf x}) \mu(d{\bf x}) = \int_{\mathbb{R}\times\{0,1\}^\infty} \int_{\mathbb{R}} h(yx_1, yx_2, yx_3, \dots) \eta_1(dy) \eta_2(d{\bf x})$$
for all functions $h:\mathbb{R}^\infty \to \mathbb{R}$. We can then identify $\eta_1$ with the probability distribution of $C$
and $\eta_2$ with the probability distribution of the vector $(Q, 1(N>1), 1(N> 2), \dots)$, which fully determines $(Q,N)$. With this interpretation, the limiting measure $\mu$ defines a weighted branching process with a generic branching vector of the form $(Q, N, C_1, C_2, \dots)$ with the $\{C_i\}_{i \geq 1}$ i.i.d.~and independent of $(Q, N)$; condition \eqref{eq:FiniteNorm} implies that $E[N] < \infty$.

We refer to the case where we analyze a sequence of weighted branching processes as {\it Case~1}, and to the case where we analyze a sequence of weighted branching trees as {\it Case~2}. For {\it Case~2}, in addition to the measure $\mu_n$ defined above, we define $\nu_n^*$ to be the probability measure of the vector $(Q^{(n)}, N^{(n)})$ and $\nu^*$ to be the probability measure of $(Q, N)$. The symbol $\Rightarrow$ denotes convergence in distribution and $\stackrel{d_1}{\longrightarrow}$ denotes convergence in the Kantorovich-Rubinstein distance.

\begin{theo} \label{T.MainHomo}
Define the processes $\{ W^{(n,j)}: j \geq 0\}$, $n \geq 1$, and $\{W^{(j)}: j \geq 0\}$ according to \eqref{eq:Wprocess}. Suppose that as $n \to \infty$,
$$d_1(\mu_n,\mu) \to 0 \quad \text{({\it Case 1})} \quad \text{or}  \quad d_1(\nu_n^*,\nu^*) + d_1(\mu_n,\mu) \to 0 \quad \text{({\it Case 2})}.$$
Then, for any fixed $j\in\mathbb N$
$$ W^{(n,j)} \stackrel{d_1}{\longrightarrow} W^{(j)}, \qquad n \to \infty.$$
Moreover, if $Q^{(n)} = Q \equiv 1$, and $C_j^{(n)}, C_j$ are nonnegative for all $n$ and $j$, then for any $j_n \in \mathbb{N}$ such that $j_n \to \infty$ and
$$ j_n \, d_1(\mu_n,\mu) \to 0 \quad \text{({\it Case 1})} \quad \text{or} \quad d_1(\nu_n^*, \nu^*)+  j_n \, d_1(\mu_n,\mu) \to 0 \quad \text{({\it Case 2})},$$
as $n \to \infty$, we have
$$\frac{W^{(n, j_n)}}{\rho_n^{j_n}} \Rightarrow \mathcal{W} \qquad \text{and} \qquad \frac{W^{(n, j_n)}}{\rho^{j_n}} \Rightarrow \mathcal{W},$$
where $\mathcal{W}$ is the a.s. limit of $W^{(j)}/\rho^j$ as $j \to \infty$.
\end{theo}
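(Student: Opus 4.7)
The plan is to treat the two conclusions of the theorem separately. For the fixed-$j$ assertion I would apply Proposition~\ref{P.CouplingWBP} in \emph{Case~1} and Proposition~\ref{P.CouplingWBT} in \emph{Case~2}, choosing the couplings $\pi$ (and $\pi^*$ in Case~2) to be optimal, so that $\mathcal{E}=d_1(\mu_n,\mu)$ and $\mathcal{E}^*=d_1(\nu_n^*,\nu^*)$. Since the map $\mathbf{x}\mapsto\sum_{i\geq 2}|x_i|$ is $1$-Lipschitz with respect to $\|\cdot\|_1$, the duality formula yields $|\rho_n-\rho|\leq d_1(\mu_n,\mu)\to 0$; an analogous Lipschitz argument handles $E[N^{(n)}]\to E[N]$ in Case~2. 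Hence for each fixed $j$ the prefactor in the Proposition's bound is uniformly bounded in $n$, and so $E|W^{(n,j)}-W^{(j)}|\to 0$, which implies $d_1(W^{(n,j)},W^{(j)})\to 0$.

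For the second conclusion, take $Q^{(n)}=Q\equiv 1$ with nonnegative weights and apply the triangle inequality
$$\left|\frac{W^{(n,j_n)}}{\rho^{j_n}}-\mathcal{W}\right| \leq \frac{|W^{(n,j_n)}-W^{(j_n)}|}{\rho^{j_n}} + \left|\frac{W^{(j_n)}}{\rho^{j_n}}-\mathcal{W}\right|.$$
The second term tends to zero almost surely by the martingale convergence recalled in Section~\ref{S.EndogenousSol}. For the first term, Proposition~\ref{P.CouplingWBP} with $E[|Q|]=1$, divided by $\rho^{j_n}$, gives
$$E\left[\frac{|W^{(n,j_n)}-W^{(j_n)}|}{\rho^{j_n}}\right] \leq \left(\left(\frac{\rho_n}{\rho}\right)^{j_n} + \frac{1}{\rho}\sum_{t=0}^{j_n-1}\left(\frac{\rho_n}{\rho}\right)^{t}\right) d_1(\mu_n,\mu).$$
Using $|\rho_n-\rho|\leq d_1(\mu_n,\mu)$ together with the hypothesis $j_n\,d_1(\mu_n,\mu)\to 0$, I would show that $j_n(\rho_n/\rho-1)\to 0$ and hence $(\rho_n/\rho)^{j_n}\to 1$; the whole right-hand side is then $O(j_n\,d_1(\mu_n,\mu))\to 0$. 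The Case~2 analysis is parallel, via Proposition~\ref{P.CouplingWBT}, except that the bound carries an additional summand of the form $\hat\rho^{j_n-1}\mathcal{E}^*/\rho^{j_n}=O(d_1(\nu_n^*,\nu^*))$, which explains why the hypothesis on $d_1(\nu_n^*,\nu^*)$ appears without a $j_n$ factor. Combining gives $L^1$-convergence of the first term, so $W^{(n,j_n)}/\rho^{j_n}\to\mathcal{W}$ in probability, and hence in distribution; the statement for $W^{(n,j_n)}/\rho_n^{j_n}$ then follows by Slutsky's theorem applied to
$$\frac{W^{(n,j_n)}}{\rho_n^{j_n}} = \frac{W^{(n,j_n)}}{\rho^{j_n}}\cdot\left(\frac{\rho}{\rho_n}\right)^{j_n},$$
since $(\rho/\rho_n)^{j_n}\to 1$ by the same estimate.

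The main obstacle I anticipate is uniformly controlling the factor $(\rho_n/\rho)^{j_n}$ under the minimal hypothesis $j_n\,d_1(\mu_n,\mu)\to 0$: the crude estimate $\rho_n\leq \rho + d_1(\mu_n,\mu)$ is not itself sufficient, and one must pass through a logarithmic expansion to see that $j_n\log(\rho_n/\rho)=j_n\cdot O(\rho_n-\rho)=o(1)$, with a quadratic remainder that is also negligible. Once this bounded-prefactor estimate is in hand, the remaining ingredients---martingale convergence, Slutsky's theorem, and the $1$-Lipschitz duality for $\rho_n$---are routine.
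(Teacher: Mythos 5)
Your proposal is correct and follows essentially the same route as the paper's proof: optimal couplings fed into Propositions \ref{P.CouplingWBP} and \ref{P.CouplingWBT}, the estimate $|\rho_n-\rho|\leq d_1(\mu_n,\mu)$ from the Lipschitz/duality characterization, control of $(\rho_n/\rho)^{j_n}$ under $j_n\,d_1(\mu_n,\mu)\to 0$ (the paper uses the elementary bounds $(x\vee 1)^j\leq e^{j(x-1)^+}$ and $|x^j-1|\leq j|x-1|e^{(j-1)(x-1)^+}$ in place of your logarithmic expansion), and the a.s.\ martingale convergence $W^{(j_n)}/\rho^{j_n}\to\mathcal{W}$. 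The only immaterial difference is the order of the two normalizations: you compare $W^{(n,j_n)}/\rho^{j_n}$ with $W^{(j_n)}/\rho^{j_n}$ first and recover the $\rho_n^{j_n}$-normalization by Slutsky, while the paper first bounds $E\left[\left|W^{(n,j_n)}/\rho_n^{j_n}-W^{(j_n)}/\rho^{j_n}\right|\right]$, using $E[W^{(j_n)}/\rho^{j_n}]=1$ to absorb the cross term $\left|(\rho/\rho_n)^{j_n}-1\right|$, and then deduces the $\rho^{j_n}$-normalized statement.
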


As pointed out in Section \ref{S.EndogenousSol}, $\mathcal{W}$ is the unique endogenous solution to the SFPE
$$\mathcal{W} \stackrel{\mathcal{D}}{=} \sum_{i=1}^N \frac{C_i}{\rho} \, \mathcal{W}_i,$$
where the $\{ \mathcal{W}_i\}$ are i.i.d.~copies of $\mathcal{W}$, independent of $(N, C_1, C_2, \dots)$. See \cite{Lyons_97, Liu_98, Als_Iks_09} for conditions on when the random variable $\mathcal W$, which satisfies $E[\mathcal W] \leq 1$, is non-trivial, as well as characterizations of its tail behavior. Furthermore, when $E[\mathcal{W}] = 1$ we can replace the convergence in distribution with convergence in the Kantorovich-Rubinstein distance, i.e.,
$$\frac{W^{(n, j_n)}}{\rho_n^{j_n}} \stackrel{d_1}{\longrightarrow} \mathcal{W} \qquad \text{and} \qquad \frac{W^{(n, j_n)}}{\rho^{j_n}} \stackrel{d_1}{\longrightarrow} \mathcal{W}, \qquad n \to \infty.$$

We now give a similar result for the nonhomogeneous equation.

\begin{theo} \label{T.MainNonHomo}
Define the processes $\{ R^{(n,k)}: k \geq 0\}$, $n \geq 1$, and $\{R^{(k)}: k \geq 0\}$ according to \eqref{eq:Rprocess}. Suppose that as $n \to \infty$,
$$d_1(\mu_n,\mu) \to 0 \quad \text{({\it Case 1})} \quad \text{or}  \quad d_1(\nu_n^*,\nu^*) + d_1(\mu_n,\mu) \to 0 \quad \text{({\it Case 2})}.$$
Then, for any fixed $k \in \mathbb{N}$,
$$ R^{(n,k)} \stackrel{d_1}{\longrightarrow} R^{(k)}, \qquad n \to \infty.$$
Moreover, if $\rho < 1$, then for any $k_n \in \mathbb{N}$ such that $k_n \to \infty$ as $n \to \infty$, we have
$$R^{(n,k_n)} \stackrel{d_1}{\longrightarrow} R, \qquad n \to \infty, $$
where $R = \sum_{k=0}^\infty \sum_{{\bf i} \in A_k} \Pi_{\bf i} Q_{\bf i}$ is the a.s. limit of $R^{(k)}$ as $k \to \infty$.
\end{theo}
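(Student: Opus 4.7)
The plan is to reduce the theorem to Propositions 4.1 and 4.2 of Section 4.1 via a triangle inequality and a telescoping expansion. Working on the common probability space that carries the optimal $(\mu_n,\mu)$-coupling at every non-root node (and, in Case 2, an additional optimal $(\nu_n^*,\nu^*)$-coupling at the root), the identity $R^{(n,k)}-R^{(k)} = \sum_{j=0}^{k}(W^{(n,j)}-W^{(j)})$ yields
\begin{equation*}
d_1(R^{(n,k)},R^{(k)})\le E\bigl[ |R^{(n,k)}-R^{(k)}| \bigr] \le \sum_{j=0}^{k} E\bigl[ |W^{(n,j)}-W^{(j)}| \bigr],
\end{equation*}
with each summand bounded directly by Proposition 4.1 (Case 1) or Proposition 4.2 (Case 2). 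For fixed $k$, the $n$-dependent coefficients in front of $d_1(\mu_n,\mu)$ (and, in Case 2, $d_1(\nu_n^*,\nu^*)$) are polynomials in $\rho_n,\rho$ and, in Case 2, also $E[N^{(n)}]$. All of these stay bounded because the duality formula applied to the 1-Lipschitz functionals ${\bf x}\mapsto\sum_i|x_i|$ and $(q,N)\mapsto N$ gives $|\rho_n-\rho|\le d_1(\mu_n,\mu)$ and $|E[N^{(n)}]-E[N]|\le d_1(\nu_n^*,\nu^*)$, both vanishing by hypothesis. This proves the fixed-$k$ assertion $R^{(n,k)}\stackrel{d_1}{\longrightarrow}R^{(k)}$.

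For the second assertion assume $\rho<1$ and split
\begin{equation*}
d_1(R^{(n,k_n)},R)\le d_1(R^{(n,k_n)},R^{(k_n)})+d_1(R^{(k_n)},R).
\end{equation*}
The second summand is controlled via $R-R^{(k)}=\sum_{j=k+1}^{\infty}W^{(j)}$ together with the elementary bound $E[|W^{(j)}|]\le E[|Q|]\rho^j$, giving $d_1(R^{(k_n)},R)\le E[|Q|]\rho^{k_n+1}/(1-\rho)\to 0$. For the first summand, the same telescoping as in the fixed-$k$ case produces
\begin{equation*}
E\bigl[|R^{(n,k_n)}-R^{(k_n)}|\bigr] \le M_n\, d_1(\mu_n,\mu) + M_n^*\, d_1(\nu_n^*,\nu^*),
\end{equation*}
where $M_n^*$ and the second summand appear only in Case 2, and $M_n,M_n^*$ denote the partial sums through $j=k_n$ of the coefficients furnished by Propositions 4.1/4.2. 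Since $\rho_n\to\rho<1$, for $n$ large $\rho_n\le (1+\rho)/2<1$, so these partial sums are dominated by convergent series such as $\sum_{j\ge 0}\rho_n^j=(1-\rho_n)^{-1}$ and $\sum_{j\ge 1}\sum_{t=0}^{j-1}\rho^t\rho_n^{j-1-t}=[(1-\rho)(1-\rho_n)]^{-1}$, hence are bounded uniformly in $k_n$. Consequently the first summand also vanishes.

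The main obstacle is exactly this uniform boundedness of $M_n,M_n^*$ in $k_n$, which is where the hypothesis $\rho<1$ is essential: without it the coefficients from Section 4.1 grow with $k_n$ and nothing forces $d_1(R^{(n,k_n)},R^{(k_n)})$ to vanish. A secondary technical point specific to Case 2 is that the constant $K=E[N^{(n)}]\vee E[N]E[|CQ|]/\rho$ appearing in Proposition 4.2 depends on $n$ through $E[N^{(n)}]$; the bound $|E[N^{(n)}]-E[N]|\le d_1(\nu_n^*,\nu^*)\to 0$ noted above guarantees $K$ stays bounded, closing the argument.
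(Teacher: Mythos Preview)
Your proposal is correct and follows essentially the same approach as the paper: both arguments telescope $R^{(n,k)}-R^{(k)}=\sum_{j=0}^k(W^{(n,j)}-W^{(j)})$, apply Propositions~4.1/4.2 under the optimal coupling, use $|\rho_n-\rho|\le d_1(\mu_n,\mu)$ to force $\rho_n<1$ eventually so that the coefficient sums are dominated by convergent geometric series uniformly in $k_n$, and finish with the tail bound $E[|R^{(k_n)}-R|]\le E[|Q|]\rho^{k_n+1}/(1-\rho)$. Your explicit remark that $|E[N^{(n)}]-E[N]|\le d_1(\nu_n^*,\nu^*)$ keeps the Case~2 constant bounded is a small clarification the paper leaves implicit, but otherwise the two proofs are the same.
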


In the statement of the theorem, provided $\rho<1$, $R$ is the unique endogenous solution to the SFPE
\begin{equation} \label{eq:SFPE2}
R \stackrel{\mathcal{D}}{=} \sum_{i=1}^N C_i R_i + Q,
\end{equation}
where the $\{R_i\}$ are i.i.d.~copies of $R$, independent of $(Q, N, C_1, C_2, \dots)$. Moreover, the asymptotic behavior of $P( R > x)$ as $x \to \infty$ can be described for several different assumptions on the generic vector $(Q, N, C_1, C_2, \dots)$. We refer the reader to \cite{Jel_Olv_12b} and \cite{Olvera_12} for the precise set of theorems.

Note that in {\it Case 1}, the convergence of $R^{(n,k)}$ as $k \to \infty$ for a fixed $n$ is guaranteed whenever $E[ \sum_{i=1}^{N^{(n)}} | C_i^{(n)}|^\beta ] < 1$ for some $0 < \beta \leq 1$ (see Lemma 4.1 in \cite{Jel_Olv_12b}), and its limit, $R^{(n)}$ would be the unique endogenous solution to
\begin{equation} \label{eq:SFPEn}
R^{(n)} \stackrel{\mathcal{D}}{=} \sum_{i=1}^{N^{(n)}} C_i^{(n)} R_i^{(n)} + Q^{(n)}.
\end{equation}
For {\it Case 2}, on the other hand, an adaptation of the proof of Lemma 4.1 in \cite{Jel_Olv_12b} would give that $R^{(n,k)}$ converges a.s. to
$$R^{(n)} = \sum_{j=0}^\infty W^{(n,j)},$$
as $k \to \infty$, with $R^{(n)}$ finite a.s., provided $E[ N^{(n)} | C^{(n)} |^\beta ] < 1$ for some $0 < \beta \leq 1$. However, this random variable $R^{(n)}$ would not necessarily have the interpretation of being a solution to \eqref{eq:SFPEn}.

We end this section with a result for the weighted branching tree setting that states that $d_1(\mu_n,\mu)$ converges to zero whenever $d_1(\nu_n,\nu)$ and the moments of $Q^{(n)} C^{(n)}$ and $N^{(n)} C^{(n)}$ do.

\begin{lemma} \label{L.FinalConditionsWBT}
For {\it Case 2}, suppose that as $n \to \infty$, $d_1(\nu_n, \nu) \to 0$, $E[|C^{(n)} Q^{(n)}|] \to E[|CQ|]$ and $E[|C^{(n)}| N^{(n)} ] \to E[|C|N]$. Then,
$$d_1(\mu_n, \mu) \to 0, \qquad n \to \infty.$$
\end{lemma}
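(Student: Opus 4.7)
The plan is to bound $d_1(\mu_n,\mu)$ by pushing forward an optimal coupling of $\nu_n$ and $\nu$ through the map
$$F(q,m,c) = (cq,\, c\, 1(m\ge 1),\, c\, 1(m\ge 2),\dots),$$
and then to show via a uniform-integrability argument that the resulting expected $\ell_1$-distance tends to zero.

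First, I would let $\pi_n$ be the optimal coupling of $\nu_n$ and $\nu$, so that on a common probability space supporting $(Q^{(n)},N^{(n)},C^{(n)},Q,N,C)$ one has
$$E_{\pi_n}\bigl[|Q^{(n)}-Q|+|N^{(n)}-N|+|C^{(n)}-C|\bigr]=d_1(\nu_n,\nu)\to 0.$$
Setting $\mathbf{X}^{(n)}=F(Q^{(n)},N^{(n)},C^{(n)})\sim\mu_n$ and $\mathbf{X}=F(Q,N,C)\sim\mu$, the pushforward of $\pi_n$ under $(F,F)$ is a coupling of $\mu_n$ and $\mu$, so
$$d_1(\mu_n,\mu)\le E_{\pi_n}\bigl[\|\mathbf{X}^{(n)}-\mathbf{X}\|_1\bigr],$$
and a direct case analysis on whether $i\le N^{(n)}\wedge N$, $N^{(n)}\wedge N<i\le N^{(n)}\vee N$, or $i>N^{(n)}\vee N$ yields
$$\|\mathbf{X}^{(n)}-\mathbf{X}\|_1 = |C^{(n)}Q^{(n)}-CQ|+(N^{(n)}\wedge N)|C^{(n)}-C|+(N^{(n)}-N)^+|C^{(n)}|+(N-N^{(n)})^+|C|.$$

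Next, I would establish that $Y_n := \|\mathbf{X}^{(n)}-\mathbf{X}\|_1 \to 0$ in $\pi_n$-probability. The $L^1(\pi_n)$ convergence of each coordinate gives convergence in probability: $Q^{(n)}\to Q$, $N^{(n)}\to N$, and $C^{(n)}\to C$. Since $N$ and $N^{(n)}$ are integer-valued, Markov's inequality combined with $E_{\pi_n}[|N^{(n)}-N|]\to 0$ forces $\pi_n(N^{(n)}\ne N)\to 0$. On the event $\{N^{(n)}=N\}$, the expression above collapses to $|C^{(n)}Q^{(n)}-CQ| + N|C^{(n)}-C|$, which tends to zero in probability by continuous mapping (tightness of $Q^{(n)}$ under $\pi_n$ follows from the weak convergence implied by $d_1(\nu_n,\nu)\to 0$).

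The main obstacle, and where the two moment-convergence hypotheses enter, is upgrading convergence in probability to convergence in $L^1(\pi_n)$. Since $d_1(\nu_n,\nu)\to 0$ implies $\nu_n\Rightarrow\nu$, the continuous mapping theorem gives $|C^{(n)}Q^{(n)}|\Rightarrow|CQ|$ and $|C^{(n)}|N^{(n)}\Rightarrow|C|N$; combined with the assumed convergence of their means, the standard Vitali-type fact that weak convergence of nonnegative integrable variables together with convergence of first moments implies uniform integrability yields that $\{\|\mathbf{X}^{(n)}\|_1\}_{n\ge 1}$ is uniformly integrable under $\pi_n$. As $\|\mathbf{X}\|_1$ is a single integrable random variable, the dominating sequence $Z_n := \|\mathbf{X}^{(n)}\|_1 + \|\mathbf{X}\|_1$ is uniformly integrable as well, and from $Y_n\le Z_n$ the bound
$$E_{\pi_n}[Y_n] \le \epsilon + \sup_{m\ge 1} E_{\pi_m}\bigl[Z_m\, 1(Z_m>M)\bigr] + M\,\pi_n(Y_n>\epsilon),$$
valid for all $\epsilon, M>0$, together with $Y_n\to 0$ in $\pi_n$-probability, yields $E_{\pi_n}[Y_n]\to 0$ upon letting first $n\to\infty$, then $M\to\infty$, and finally $\epsilon\to 0$. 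This delivers $d_1(\mu_n,\mu)\to 0$, as required.
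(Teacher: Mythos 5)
Your proof is correct and follows essentially the same route as the paper's: push the optimal coupling of $\nu_n$ and $\nu$ forward to a coupling of $\mu_n$ and $\mu$, compute the same explicit $\ell_1$-distance formula, establish convergence to zero in probability, and then upgrade to $L^1$ using the two moment hypotheses via a uniform-integrability argument. The paper packages the last step slightly differently (it deduces $E\bigl[\bigl|\,\|{\bf Y}_n\|_1-\|{\bf Y}\|_1\,\bigr|\bigr]\to 0$ from convergence of the means and then dominates $\|{\bf Y}_n-{\bf Y}\|_1$ by $Q_n+2\|{\bf Y}\|_1$), but this is the same idea in different clothing.
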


\section{Applications} \label{S.Applications}

As mentioned in the introduction, our interest in analyzing the processes $\{\hat W^{(j)}:j\geq0\}$ and $\{ \hat R^{(k)}: k \geq 0\}$ when they are constructed on a weighted branching tree, rather than a weighted branching process, comes from applications to the analysis of random graphs and information ranking algorithms. This section provides two examples in which an application of the main results in Section \ref{S.Main} lead to the special endogenous solution to \eqref{eq:SFPE}.

\subsection{Ranking algorithms on a directed configuration network}

Our first example is related to the analysis of ranking algorithms on directed graphs. In particular, we are interested in studying the distribution of the ranks produced by spectral ranking algorithms, e.g., Google's PageRank, used to rank webpages on the World Wide Web. More precisely, the recent work in \cite{Chen_Litv_Olv_14} considers a sequence of random graphs constructed according to the directed configuration model \cite{Chen_Olv_13} and shows that the rank of a randomly chosen node can be coupled with a random variable $R^{(n,k_n)}$ (of the form in \eqref{eq:Rprocess} and built on a weighted branching tree) where the $n$ refers to the number of nodes in the graph. An application of a version of Theorem \ref{T.MainNonHomo} then leads to the rank of a randomly chosen node having a representation in terms of the special endogenous solution  to \eqref{eq:SFPE}, as defined in Section \ref{S.EndogenousSol}, as the number of nodes in the graph grows to infinity.

Before stating the precise version of Theorem~\ref{T.MainNonHomo} that is needed in this application it will be helpful to give some details about the configuration model. The configuration or pairing model (see, e.g., \cite{HofstadRG, Bollobas_2001}), produces a random graph from a given degree sequence by assigning to each node a number of half-edges equal to its degree and then randomly pairing the half-edges to form a graph. Similarly, the directed configuration model generates a directed graph from a given bi-degree sequence (sequence of in-degrees and out-degrees). In both cases, the resulting graph, conditional on it not having self-loops or multiple edges, is a graph uniformly chosen at random from all simple graphs having that degree (bi-degree) sequence. We observe that given the degree sequence(s), the randomness in the graph comes from the pairing process, so it makes sense that in applications one often works on the conditional probability space given the degree sequence(s).

It is due to this last observation that in order to obtain the convergence of the rank of a randomly chosen node on a directed configuration model one needs to apply Theorem~\ref{T.MainNonHomo} conditionally on the sigma-algebra generated by the bi-degree sequence. Other random graph models, e.g., the generalized random graph \cite{HofstadRG}, require conditioning on a ``weight" sequence. Moreover, the analysis of problems related to the configuration model in general (directed or undirected) often relies on a coupling with a weighted branching tree where the root node has a different distribution, hence the need to further tailor the theorem.

In order to state a suitable theorem for the analysis of spectral algorithms on random graphs, we first need to introduce some additional notation. We consider a sequence of sigma-algebras generated by a finite set of random variables/vectors (e.g., the degree sequences). Next, for each $n \geq 1$, and conditionally on $\mathscr{F}_n$, we construct a weighted branching tree using the generic branching vector $(Q^{(n)}, N^{(n)}, C^{(n)})$, whose (conditional) probability measure we denote by $\nu_n$. Moreover, we allow the root branching vector, $(Q^{(n)}_\emptyset, N^{(n)}_\emptyset)$ to have a different (conditional) distribution, say having a probability measure $\nu_n^*$. In other words, the $n$th weighted branching tree is constructed, conditionally on $\mathscr{F}_n$, using the sequence of conditionally i.i.d.~vectors $\{ (Q^{(n)}_{\bf i}, N^{(n)}_{\bf i}, C^{(n)}_{\bf i}) \}_{{\bf i} \in U, {\bf i} \neq \emptyset}$ distributed according to $\nu_n$, and $(Q^{(n)}_\emptyset, N^{(n)}_\emptyset)$ is conditionally independent of this sequence and is distributed according to $\nu_n^*$. Note that unconditionally, the $\nu_n$ and $\nu_n^*$ are random elements of $\mathscr{P}_1(\mathbb{R}^3)$ and $\mathscr{P}_1(\mathbb{R}^2)$, respectively (e.g., the empirical measures constructed from the degree sequences).

In the statement of the theorem below, we assume that $\nu$ and $\nu^*$ are the probability measures of the vectors $(Q, N, C)$ and $(Q_\emptyset, N_\emptyset)$, respectively, with $C$ independent of $(Q, N)$, i.e., the limiting weighted branching tree is a delayed weighted branching process. The measures $\nu$ and $\nu^*$ are fixed elements of $\mathscr{P}_1(\mathbb{R}^3)$ and $\mathscr{P}_1(\mathbb{R}^2)$, respectively, and hence are independent of $\mathscr{F}_n$. The symbol $ \stackrel{P}{\to}$ denotes convergence in probability.

\begin{theo} \label{T.MainWBTExt}
Conditionally on $\mathscr{F}_n$, define the processes $\{R^{(n,k)}; k \geq 0\}$, $n \geq 1$, according to \eqref{eq:Rprocess}. Similarly, define $\{R^{(k)}: k \geq 0\}$. Suppose that as $n \to \infty$,
$$d_1(\nu_n^*, \nu^*) + d_1(\mu_n, \mu) \stackrel{P}{\longrightarrow} 0.$$
Then for any fixed $k\in\mathbb N$
$$R^{(n,k)} \Rightarrow R^{(k)}, \qquad n \to \infty. $$
Moreover, if $\rho = E[N]E[|C|] < 1$, then for any $k_n \in \mathbb{N}$ such that $k_n \to \infty$ as $n \to \infty$, we have
$$R^{(n, k_n)} \Rightarrow \mathcal{R}, \qquad n \to \infty,$$
where $\mathcal{R} = \sum_{k=0}^\infty \sum_{{\bf i} \in A_k} \Pi_{\bf i} Q_{\bf i}$ is the a.s. limit of $R^{(k)}$ as $k \to \infty$.
\end{theo}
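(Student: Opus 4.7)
The statement is a random-measure, in-probability refinement of Theorem~\ref{T.MainNonHomo}, and my plan is to apply Proposition~\ref{P.CouplingWBT} conditionally on $\mathscr{F}_n$, promote the convergence in probability of $d_1(\mu_n,\mu)+d_1(\nu^*_n,\nu^*)$ to almost sure convergence along subsubsequences, and conclude weak convergence by combining dominated convergence against bounded Lipschitz test functions with the duality formula for $d_1$.

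\emph{Fixed $k$.} Working conditionally on $\mathscr{F}_n$, I would construct both trees on a common probability space using optimal couplings: the root vectors $(Q^{(n)}_\emptyset,N^{(n)}_\emptyset,Q_\emptyset,N_\emptyset)$ drawn from an optimal coupling $\pi^*$ of $\nu^*_n$ and $\nu^*$, and the internal generic vectors from a coupling $\pi$ achieving $\mathcal{E}\leq d_1(\mu_n,\mu)$ in the notation of Proposition~\ref{P.CouplingWBT}. Summing the proposition's bound over $j=0,\dots,k$ via $R^{(n,k)}-R^{(k)}=\sum_{j=0}^k(W^{(n,j)}-W^{(j)})$ yields
$$E\bigl[\,|R^{(n,k)}-R^{(k)}|\,\mid\mathscr{F}_n\bigr]\leq c_k^{(n)}\bigl(d_1(\mu_n,\mu)+d_1(\nu^*_n,\nu^*)\bigr),$$
where $c_k^{(n)}$ is $\mathscr{F}_n$-measurable and built from conditional first moments of $\nu_n,\nu^*_n$ (for instance $\rho_n=E[N^{(n)}|C^{(n)}|\mid\mathscr{F}_n]$ and $E[|Q^{(n)}|\mid\mathscr{F}_n]$). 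Given any subsequence, the in-probability hypothesis extracts a further subsubsequence on which $d_1(\mu_n,\mu)+d_1(\nu^*_n,\nu^*)\to 0$ almost surely; since $d_1$-convergence entails convergence of marginals and of first moments, $\rho_n\to\rho$ a.s.\ on this subsubsequence and $c_k^{(n)}$ is a.s.\ bounded, so the conditional expectation above tends to $0$ a.s. For any bounded Lipschitz $\psi$, duality gives $|E[\psi(R^{(n,k)})-\psi(R^{(k)})\mid\mathscr{F}_n]|\leq\|\psi\|_{\text{Lip}}\,E[|R^{(n,k)}-R^{(k)}|\mid\mathscr{F}_n]\wedge 2\|\psi\|_\infty$; dominated convergence then passes to unconditional expectations, and the standard subsubsequence principle upgrades this to $R^{(n,k)}\Rightarrow R^{(k)}$.

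\emph{Varying $k_n$ with $\rho<1$.} I would use the decomposition
$$E[\psi(R^{(n,k_n)})]-E[\psi(\mathcal{R})]=\bigl(E[\psi(R^{(n,k_n)})]-E[\psi(R^{(k_n)})]\bigr)+\bigl(E[\psi(R^{(k_n)})]-E[\psi(\mathcal{R})]\bigr).$$
The second term tends to $0$ because $\rho<1$ and \eqref{eq:FiniteNorm} give $\sum_j E|W^{(j)}|<\infty$, so $R^{(k_n)}\to\mathcal{R}$ in $L^1$. For the first term I apply Proposition~\ref{P.CouplingWBT} conditionally as above, now summing up to $j=k_n$; along a subsubsequence on which $\rho_n\to\rho<1$ a.s., eventually $\rho_n\leq(1+\rho)/2<1$, so the iterated geometric sums $\sum_{j\geq 1}\sum_{t=0}^{j-1}\rho_n^t\rho^{j-1-t}$ and $\sum_{j\geq 0}\rho_n^{j-1}$ are a.s.\ bounded uniformly in $k_n$. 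Hence $E[|R^{(n,k_n)}-R^{(k_n)}|\mid\mathscr{F}_n]\to 0$ a.s., and the same duality-plus-dominated-convergence step gives $R^{(n,k_n)}\Rightarrow\mathcal{R}$.

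\emph{Main obstacle.} The chief subtlety is that the constants in Proposition~\ref{P.CouplingWBT} are conditional first moments and hence $\mathscr{F}_n$-measurable random variables; in the varying-$k_n$ case they must be controlled uniformly in both $n$ and $k_n$. The careful use of $d_1$-convergence to obtain convergence of the relevant first moments, combined with the subsubsequence principle, handles this; the rest is routine bookkeeping.
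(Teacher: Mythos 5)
Your proposal is correct and follows essentially the same route as the paper: the conditional application of Proposition \ref{P.CouplingWBT} given $\mathscr{F}_n$, the control of the $\mathscr{F}_n$-measurable constants (in particular $\rho_n$ and the conditional first moments) via $|\rho_n-\rho|\leq d_1(\mu_n,\mu)$ and $d_1(\nu_n^*,\nu^*)$, and the combination with the $L^1$ convergence $R^{(k_n)}\to\mathcal{R}$ are exactly the paper's argument. The only difference is in the final bookkeeping: where you pass to a.s.-convergent subsubsequences and apply dominated convergence against bounded Lipschitz test functions, the paper instead truncates on the event $A_{n,\varepsilon}=\{d_1(\nu_n^*,\nu^*)+d_1(\mu_n,\mu)\leq\varepsilon\}$ with $\varepsilon\leq(1-\rho)/2$, applies Markov's inequality conditionally, and invokes bounded convergence — two interchangeable devices for the same purpose.
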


\begin{remarks}
(i) Because we allow $\nu^*$ to be different than $\nu$, the random variable $\mathcal{R}$ appearing in the limit can be written as
$$\mathcal R = \sum_{i=1}^{N_{\emptyset}} C_iR_i+Q_{\emptyset},$$
where the $\{R_i\}$ are i.i.d.~copies of the special endogenous solution $R$ to the linear SFPE \eqref{eq:SFPE2}, independent of $(Q_\emptyset, N_\emptyset, C_1, C_2, \dots)$, and with the $\{C_i\}$ i.i.d.~and independent of $(Q_\emptyset, N_\emptyset)$. In other words, $\mathcal{R}$ is a linear combination of i.i.d.~copies of the special endogenous solution to \eqref{eq:SFPE2}.

(ii) No restrictions need to be imposed on $\nu^*$, besides $\int \| {\bf x} \|_1 d\nu^*({\bf x}) < \infty$, since $C_\emptyset$ does not appear in the definitions of $W^{(j)}$ and $R^{(k)}$.

(iii) An important observation is that we have replaced the convergence in the Kantorovich-Rubinstein distance in Theorem \ref{T.MainNonHomo} with weak convergence, this is due to the fact that the proof of Theorem~\ref{T.MainWBTExt} requires that we apply Proposition \ref{P.CouplingWBT} conditionally on $\mathscr{F}_n$, which if done directly starting with $E[ | W^{(n,j)}- W^{(j)} | ]$ would lead to having to compute all the moments of $\rho_n = E[ \left. N^{(n)} |C^{(n)}| \right| \mathscr{F}_n ]$, which in general may not even be finite. Therefore, Proposition \ref{P.CouplingWBT} needs to be used after having guaranteed that $\rho_n$ is sufficiently close to $\rho$, hence the weaker mode of convergence.

(iv) The previous remark also implies that if $\rho_n \leq c < 1$ a.s., then the weak convergence in Theorem~\ref{T.MainWBTExt} can be replaced by convergence in the Kantorovich-Rubinstein sense.
\end{remarks}

\subsection{Analyzing the configuration model}

Our second example is also related to the analysis of random graphs. As mentioned earlier, when analyzing the properties of random graphs, e.g., connectivity, existence of a giant component, typical distances, phase transitions, etc., one of the basic techniques consists in coupling a ``graph exploration process" with a branching process. This is true for the configuration model (directed or undirected) as well as for other random graph models such as the Erd\H{o}s-R\'enyi graph or the generalized random graph. In line with our previous example for the analysis of ranking algorithms, we show an application of Theorem \ref{T.MainHomo} that can be used for analyzing the properties of the configuration model.

Consider an undirected graph with $n$ nodes generated according to the configuration model. In \cite{Hof_Hoo_Van_05} the authors provide an asymptotic characterization, as $n \to \infty$, of the hop count (length of the shortest path) between two randomly chosen nodes in the graph. In particular, they show that conditional on the two nodes belonging to the same component, this distance grows logarithmically in $n$. The main step of the proof consists in coupling a breadth-first exploration process, where starting from a randomly chosen node we sequentially uncover all nodes at distance one, then those at distance two, etc., with a delayed branching process (Galton-Watson process). This delayed branching process, as in our previous example, is constructed conditionally on the sigma-algebra $\mathscr{F}_n$ generated by the degree sequence.

If we denote by $\{ Z^{(n,j)}: j \geq 0\}$ be number of individuals in the $j$th generation of the coupled branching process (obtained by setting $Q_{\bf i}^{(n)} \equiv C_{\bf i}^{(n)} \equiv 1$ for all ${\bf i} \in U$ in \eqref{eq:Wprocess}), then the goal is to show that $Z^{(n,j)} /(E[N^{(n)}| \mathscr{F}_n])^j$ converges to a limit as $n,j \to \infty$. The following version of Theorem~\ref{T.MainHomo} provides such limit; here $\nu_n$ denotes the random probability measure of the conditionally i.i.d.~random variables $\{N^{(n)}_{\bf i}\}_{{\bf i} \in U, {\bf i} \neq \emptyset}$ and $\nu_n^*$ denotes that of $N_\emptyset^{(n)}$.

\begin{theo} \label{T.MainWBThomo}
Suppose there exist probability measures $\nu$ and $\nu^*$ such that
$$d_1(\nu_n, \nu) \stackrel{P}{\longrightarrow} 0 \qquad \text{and} \qquad d_1(\nu_n^*, \nu^*) \stackrel{P}{\longrightarrow} 0$$
as $n \to \infty$. Let $\{ Z^{(n,j)}: j \geq 0\}$, $n \geq 1$, be the (delayed) branching process defined by the sequence $\{N^{(n)}_{\bf i} \}_{{\bf i} \in U}$, and let $\{Z^{(j)}: j \geq 0\}$ be the one defined using $\{ N_{\bf i}\}_{{\bf i} \in U}$. Then, there exists a coupling of $\{Z^{(n,j)}: j \geq 0\}$ and $\{Z^{(j)}: j \geq 0\}$ such that for any $j_n \in \mathbb{N}$ satisfying
$$j_n \, d_1(\nu_n, \nu) \stackrel{P}{\longrightarrow} 0,$$
we have that
$$\max_{1 \leq j \leq j_n} \left| \frac{Z^{(n,j)}}{m_n^* m_n^{j-1}} - \frac{Z^{(j)}}{m^* m^{j-1}} \right| \stackrel{P}{\longrightarrow} 0 \qquad \text{and} \qquad \max_{1 \leq j \leq j_n} \frac{\left| Z^{(n,j)} - Z^{(j)} \right|}{m^{j-1}} \stackrel{P}{\longrightarrow} 0,$$
as $n \to \infty$, where $m_n = E[N^{(n)} | \mathscr{F}_n]$, $m_n^* = E[N^{(n)}_\emptyset | \mathscr{F}_n]$, $m = E[N]$, and $m^* = E[ N_\emptyset]$.
\end{theo}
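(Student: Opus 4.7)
The plan is to construct a coupling via conditional optimal couplings of the random measures, apply Proposition \ref{P.CouplingWBT} conditionally on $\mathscr{F}_n$ to bound the $L^1$ difference at each generation, and then pass to the maxima over $j \le j_n$ by exploiting the martingale structure of the rescaled processes.

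First, for each $n$, I would pick the optimal conditional couplings $\pi_n$ of $\nu_n$ and $\nu$ and $\pi_n^*$ of $\nu_n^*$ and $\nu^*$ given $\mathscr{F}_n$ (so that $E[|N^{(n)} - N| \mid \mathscr{F}_n] = d_1(\nu_n, \nu)$ and similarly for $\pi_n^*$), and use these to generate the conditionally i.i.d.\ pairs $(N_{\bf i}^{(n)}, N_{\bf i})_{{\bf i} \neq \emptyset}$ together with an independent root pair $(N_\emptyset^{(n)}, N_\emptyset)$ on a common probability space. Setting $Q \equiv C \equiv 1$, the $W$-processes built from the $N^{(n)}$'s and $N$'s coincide with $Z^{(n,j)}$ and $Z^{(j)}$ respectively, and the conditional analogue of Proposition \ref{P.CouplingWBT} --- whose proof transfers verbatim once all expectations are interpreted as $E[\,\cdot\,|\mathscr{F}_n]$ --- yields
\begin{equation*}
E\bigl[\,|Z^{(n,j)} - Z^{(j)}|\,\bigm|\,\mathscr{F}_n\bigr] \le (m_n \vee 1) \Bigl( \sum_{t=0}^{j-1} m_n^t\, m^{j-1-t} \Bigr) D_n + m_n^{j-1} D_n^*,
\end{equation*}
where $D_n := d_1(\nu_n, \nu)$ and $D_n^* := d_1(\nu_n^*, \nu^*)$; the factor $m_n \vee 1$ arises because $E[\hat N] \vee E[N] E[|CQ|]/\rho = m_n \vee 1$ under our choices.

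The key observation for the maxima is that $M^{(n,j)} := Z^{(n,j)}/(m_n^* m_n^{j-1})$ and $M^{(j)} := Z^{(j)}/(m^* m^{j-1})$ are, conditional on $\mathscr{F}_n$, nonnegative martingales with respect to the joint filtration $\mathcal{G}_j$ generated by all coupled $N$-values at generations $0, \dots, j-1$; hence $|M^{(n,j)} - M^{(j)}|$ is a nonnegative $\mathcal{G}_j$-submartingale. Doob's $L^1$ maximal inequality therefore gives
\begin{equation*}
\epsilon\, P\bigl( \max_{1 \le j \le j_n} |M^{(n,j)} - M^{(j)}| > \epsilon \bigm| \mathscr{F}_n \bigr) \le E\bigl[ |M^{(n,j_n)} - M^{(j_n)}| \bigm| \mathscr{F}_n \bigr].
\end{equation*}
Splitting the right-hand side via the triangle inequality $|M^{(n,j_n)} - M^{(j_n)}| \le |Z^{(n,j_n)} - Z^{(j_n)}|/(m_n^* m_n^{j_n-1}) + Z^{(j_n)}\,|1/(m_n^* m_n^{j_n-1}) - 1/(m^* m^{j_n-1})|$, combining with $E[Z^{(j_n)}] = m^* m^{j_n - 1}$, and inserting the conditional bound above reduces the task to showing that $(m_n \vee 1)/m_n^* \cdot j_n D_n \cdot (\max(1, m_n/m))^{j_n-1}$, $D_n^*/m_n^*$, and $|(m^*/m_n^*)(m/m_n)^{j_n-1} - 1|$ all tend to $0$ in probability. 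Since $|m_n - m| \le D_n$ and $j_n D_n \stackrel{P}{\longrightarrow} 0$ force $(m_n/m)^{j_n} \stackrel{P}{\longrightarrow} 1$, and since $m_n^* \to m^*$ and $D_n^* \to 0$ in probability, this is immediate; bounded convergence applied to the conditional probabilities (uniformly bounded by $1$) then delivers the first claimed max convergence unconditionally.

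For the second max, I would decompose
\begin{equation*}
\frac{|Z^{(n,j)} - Z^{(j)}|}{m^{j-1}} \le m_n^* (m_n/m)^{j-1}\, |M^{(n,j)} - M^{(j)}| + \bigl| m_n^* (m_n/m)^{j-1} - m^* \bigr|\, M^{(j)},
\end{equation*}
take the maximum over $1 \le j \le j_n$, and treat the two terms separately: the first is the product of a prefactor bounded in probability (by the same $(m/m_n)^{j_n} \to 1$ control) with $\max_j |M^{(n,j)} - M^{(j)}|$, which vanishes in probability by the first part; the second is the product of $\max_j |m_n^* (m_n/m)^{j-1} - m^*| \stackrel{P}{\longrightarrow} 0$ with $\max_j M^{(j)} \le \sup_{j \ge 0} M^{(j)} < \infty$ almost surely (by nonnegative martingale convergence), and so vanishes in probability by truncating $\sup_j M^{(j)}$ at a large constant. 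The main obstacle throughout is precisely the exponential-in-$j$ ratio $(m_n/m)^{j}$: controlling it uniformly for $j \le j_n$ on events of high probability is what requires the hypothesis $j_n d_1(\nu_n, \nu) \stackrel{P}{\longrightarrow} 0$, which forces $j_n |m_n - m| \stackrel{P}{\longrightarrow} 0$ and hence $(m_n/m)^{j_n} \stackrel{P}{\longrightarrow} 1$.
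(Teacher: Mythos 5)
Your proposal is correct and follows essentially the same route as the paper: the conditional optimal (quantile) coupling, a conditional application of Proposition \ref{P.CouplingWBT} with $Q\equiv C\equiv 1$, Doob's $L^1$ maximal inequality applied to the conditional martingale $M^{(n,j)}-M^{(j)}$, the power bounds \eqref{eq:PowerBounds} to control $(m_n/m)^{j_n}$ on a high-probability event where $d_1(\nu_n^*,\nu^*)+j_n d_1(\nu_n,\nu)\le\delta$, and bounded convergence. The only (harmless) deviation is in the second maximum, where you attach the ratio error $|m_n^*(m_n/m)^{j-1}-m^*|$ to the fixed limit martingale $M^{(j)}$ and invoke a.s.\ finiteness of $\sup_j M^{(j)}$, whereas the paper attaches it to $X^{(n,j)}=Z^{(n,j)}/(m_n^*m_n^{j-1})$ and runs Doob's inequality once more conditionally on $\mathscr{F}_n$.
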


In particular, Theorem \ref{T.MainWBThomo} can be used to obtain that
$$\frac{Z^{(n,j_n)}}{m_n^* m_n^{j_n-1}} \Rightarrow \mathcal{W}, \qquad n \to \infty,$$
where $\mathcal{W}$ is the a.s. limit of the martingale $Z^{(j)}/(m^* m^{j-1})$ as $j \to \infty$.

Our last result in the paper shows how large we can take $j_n$ in Theorem \ref{T.MainWBThomo} when analyzing the configuration model using a degree sequence $\{D_1, D_2, \dots, D_n\}$ of i.i.d.~random variables having common probability mass function $f$, a model used in \cite{Hof_Hoo_Van_05} to analyze the typical distance between nodes in the graph. In this context, $\mathscr{F}_n = \sigma(D_1, D_2, \dots, D_n)$,
$$\nu_n^*(\{k\}) = P(N_\emptyset^{(n)} = k | \mathscr{F}_n) = \frac{1}{n} \sum_{i=1}^n 1(D_i = k), \qquad k = 0, 1,2, \dots,$$
and
$$\nu_n(\{k\}) = P(N_1^{(n)} = k | \mathscr{F}_n) =  \sum_{i=1}^n \frac{D_i}{L_n} 1(D_i = k+1), \qquad k = 0, 1, 2, \dots,$$
with $L_n = \sum_{j=1}^n D_j$. The measure $\nu_n$ corresponds to the so-called {\em size-biased} empirical distribution. The limiting probability measures are given by
$$\nu^*(\{k\}) = f(k)  \qquad \text{and} \qquad \nu(\{k\}) = \frac{E[D 1(D=k+1)]}{E[D]}, \qquad k = 0, 1, 2, \dots,$$
where $D$ is distributed according to $f$.

\begin{lemma}\label{L.SizeBiased}
Suppose that $E[ D^{2+\epsilon} ] < \infty$ for some $\epsilon > 0$, then
$$n^{\delta'}\, d_1(\nu_n^*, \nu^*) \stackrel{P}{\longrightarrow} 0 \qquad \text{and} \qquad n^\delta \, d_1(\nu_n, \nu) \stackrel{P}{\longrightarrow} 0 \qquad n \to \infty,$$
for any $0 < \delta' < 1/2$ and  $0 < \delta < \min\{ 1/2, \epsilon/ (2+\epsilon) \}$.
\end{lemma}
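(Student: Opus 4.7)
Since $\nu_n^*,\nu^*$ and $\nu_n,\nu$ are all probability measures on $\mathbb Z_{\geq 0}$, I will use the one-dimensional identity already recorded in Section~\ref{S.Wasserstein},
$$d_1(\alpha,\beta) = \sum_{k=0}^\infty |\bar A(k) - \bar B(k)|,\qquad \bar A(k) = \alpha((k,\infty)),$$
to reduce each statement to an $\ell^1$-comparison of survival functions.

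For $\nu_n^*$ these survival functions are $\bar F_n^*(k) = n^{-1}\sum_{i=1}^n 1(D_i>k)$ and $\bar F^*(k) = P(D>k)$, and for each fixed $k$ the summands are i.i.d.~Bernoulli, so Cauchy--Schwarz gives $E|\bar F_n^*(k) - \bar F^*(k)| \leq \sqrt{P(D>k)/n}$. The hypothesis $E[D^{2+\epsilon}]<\infty$ yields $P(D>k) \leq C(k+1)^{-(2+\epsilon)}$ by Markov, and hence $\sum_k \sqrt{P(D>k)}<\infty$. Summing gives $E[d_1(\nu_n^*,\nu^*)] = O(n^{-1/2})$, and $n^{\delta'}d_1(\nu_n^*,\nu^*)\stackrel{P}{\to} 0$ for every $\delta'<1/2$ follows by Markov's inequality.

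For $\nu_n$, set $A_n(k) := n^{-1}\sum_i D_i 1(D_i>k+1)$, $a(k) := E[D 1(D>k+1)]$, $\bar L_n := L_n/n$ and $\mu := E[D]$, so that $\bar G_n(k) = A_n(k)/\bar L_n$ and $\bar G(k) = a(k)/\mu$. Decomposing
$$\bar G_n(k) - \bar G(k) = \frac{A_n(k) - a(k)}{\bar L_n} + \frac{a(k)(\mu - \bar L_n)}{\bar L_n \mu}$$
and working on the event $B_n = \{\bar L_n \geq \mu/2\}$, whose complement has vanishing probability by the law of large numbers, reduces the problem to bounding $\sum_k |A_n(k) - a(k)|$ plus an $O_P(n^{-1/2})$ denominator correction arising from $|\mu - \bar L_n|\sum_k a(k)$, where $\sum_k a(k) = E[D(D-1)^+]/\mu < \infty$ and $E|\mu - \bar L_n|=O(n^{-1/2})$ thanks to the finite second moment of $D$.

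The main step, and the main obstacle, is to control $\sum_k E|A_n(k)-a(k)|$, which under only a $(2+\epsilon)$-moment fails to sum over all of $k$. I will therefore truncate at a level $K_n$. On the head $k\leq K_n$, the $L^2$-bound $E|A_n(k)-a(k)| \leq \sqrt{E[D^2 1(D>k+1)]/n}$ together with the consequence $E[D^2 1(D>k+1)] \leq E[D^{2+\epsilon}](k+1)^{-\epsilon}$ of the moment assumption give a head contribution of order $n^{-1/2}K_n^{1-\epsilon/2}$ when $\epsilon<2$ (and $O(n^{-1/2})$ otherwise). On the tail $k>K_n$ the crude bound $|A_n(k)-a(k)|\leq A_n(k)+a(k)$ with Fubini produces expected tail $\leq 2\sum_{k>K_n}a(k) \leq 2E[D^2 1(D>K_n)] = O(K_n^{-\epsilon})$. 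Choosing $K_n = n^{1/(2+\epsilon)}$ balances the two contributions and yields $E[d_1(\nu_n,\nu)\, 1_{B_n}] = O(n^{-\min\{1/2,\, \epsilon/(2+\epsilon)\}})$, whence $n^\delta d_1(\nu_n,\nu)\stackrel{P}{\to} 0$ for every $\delta<\min\{1/2,\epsilon/(2+\epsilon)\}$ by Markov. A naive tail bound that used only the second moment of $D$ would produce the slower rate $\epsilon/(2+2\epsilon)$; exploiting the sharper decay of $E[D^2 1(D>k+1)]$ in the head estimate is precisely what recovers the desired exponent $\epsilon/(2+\epsilon)$.
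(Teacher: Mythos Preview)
Your proof is correct and follows essentially the same route as the paper: write $d_1$ as an $\ell^1$-sum of tail-probability differences, bound each summand via its $L^2$ norm on a head of length $\sim n^{1/(2+\epsilon)}$, use the crude triangle inequality on the tail, and treat the random denominator $L_n$ separately. The only cosmetic differences are that the paper controls $|L_n - n\mu|/n^{1-\delta}$ via the Marcinkiewicz--Zygmund strong law (you use the $L^2$ bound $E|\bar L_n-\mu|=O(n^{-1/2})$) and bounds the prefactor $n/L_n\cdot n^{-1}\sum_i D_i(D_i-1)^+$ by the weak law rather than by restricting to $B_n$; note also a harmless slip in your constant $\sum_k a(k)=E[D(D-1)^+]$, without the extra $1/\mu$.
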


\section{Proofs} \label{S.Proofs}

This last section contains the proofs of all the results presented throughout the paper. For the reader's convenience they are organized according to the section in which their statements appear.

\subsection{Bounds for the Kantorovich-Rubnistein distance}

We first prove Proposition \ref{P.CouplingWBP}, which bounds the Kantorovich-Rubinstein distance of the linear processes on two coupled weighted branching processes, by the same distance of their generic branching vectors.

\begin{proof}[Proof of Proposition \ref{P.CouplingWBP}]
Define $\mathcal{E} = E_\pi [ | \hat Q - Q | + \sum_{i=1}^\infty | \hat B_i - B_i | ]$, where the vector $(Q, B_1, B_2, \dots, \hat Q, \hat B_1, \hat B_2, \dots)$ is distributed according to $\pi$. Recall that the weights $\Pi_{\bf i}$ and $\hat \Pi_{\bf i}$ follow the recursions
$$\Pi_{({\bf i}, j)} = \Pi_{\bf i} B_{({\bf i}, j)} \qquad \text{and} \qquad \hat \Pi_{({\bf i}, j)} = \hat \Pi_{\bf i} \hat B_{({\bf i}, j)},$$
with $\Pi_{\emptyset} = \hat \Pi_{\emptyset} = 1$. Now note that for $j = 0$ we have
$$E \left[ \left| \hat W^{(0)} - W^{(0)} \right| \right] = {E} \left[ \left| \hat Q- Q\right| \right] \leq \mathcal{E}.$$
To analyze the expression for $j \geq 1$, define for $r \geq 1$, $W^{(j-1)}_r= \sum_{(r,\mathbf{i}) \in \mathbb{N}_+^j}Q_{(r,{\bf i})} \Pi_{(r,{\bf i})}/ B_r $ and \linebreak $\hat W^{(j-1)}_r=\sum_{(r,\mathbf{i}) \in \mathbb{N}_+^j} \hat Q_{(j,{\bf i})} \hat \Pi_{(r,{\bf i})}/ \hat B_r $.
We then have
$$\hat W^{(j)} = \sum_{r=1 }^{\infty} \hat B_r \hat W^{(j-1)}_r \qquad \text{and} \qquad W^{(j)} = \sum_{r=1 }^{\infty}B_r W^{(j-1)}_{r}.$$
Next, note that
\begin{align*}
    {E} \left[ \left| \hat W^{(j)} - W^{(j)} \right| \right] &\leq \sum_{r=1}^{\infty}   E \left[ \left| \hat B_r \hat W^{(j-1)}_r - B_r W^{(j-1)}_{r}\right| \right]  \\
    &\leq \sum_{r = 1}^\infty  \left\{  E \left[ \left| W^{(j-1)}_{r}(\hat B_r-B_r)\right|+\left|\hat B_r\left(\hat W^{(j-1)}_{r} - W^{(j-1)}_{r}\right)\right| \right] \right\} \\
    &\leq \sum_{r = 1}^\infty E\left[ \left|\hat B_r-B_r\right| \right] E \left[ \left| W^{(j)}_{r}\right|\right]+ \sum_{r = 1}^\infty E\left[ |\hat B_r| \right]E\left[\left|\hat W^{(j)}_{r} - W^{(j)}_{r}\right|\right] \\
    &\leq   E\left[ \left| W^{(j-1)} \right|\right] \mathcal{E} + \hat\rho E\left[\left|\hat W^{(j-1)} - W^{(j-1)} \right|\right] ,
\end{align*}
where we used the independence of the root vectors and their offspring, the observation that the random variables $\{W^{(j-1)}_r\}_{r \geq 1}$ are i.i.d.~with the same distribution as $W^{(j-1)}$ and $\{ (\hat W^{(j-1)}_{r} - W^{(j-1)}_{r})\}_{r \geq 1}$ are i.i.d.~with the same distribution as $\hat W^{(j-1)} - W^{(j-1)}$. Moreover,
\begin{equation*}
    E\left[\left| W^{(j-1)}\right| \right]\le E\left[\left|Q\right|\right]\sum_{\mathbf{i}\in \mathbb{N}_+^{j-1}}E\left[ \left|\Pi_{\mathbf{i}}\right| \right]=E\left[\left|Q\right|\right] \rho^{j-1}.
\end{equation*}
It follows that
\begin{align*}
    {E} \left[ \left| \hat W^{(j)} - W^{(j)} \right| \right]&\le E\left[\left|Q\right|\right]\rho^{j-1} \mathcal{E} +\hat\rho {E} \left[ \left| \hat W^{(j-1)} - W^{(j-1)} \right| \right] \quad \text{after $(j-1)$ iterations}\\
    &\le \left(\hat\rho^j +E\left[ \left|Q\right| \right]\sum_{t=0}^{j-1} \rho^t\hat \rho^{j-1-t}\right) \mathcal{E}.
\end{align*}
This completes the proof.
\end{proof}

Similarly we can prove an upper bound for weighted branching trees.

\begin{proof}[Proof of Proposition \ref{P.CouplingWBT}]
We construct the processes $\hat W^{(j)}$ and $W^{(j)}$ on two weighted branching trees using a coupled vector $(Q_\emptyset, N_\emptyset, \hat Q_\emptyset, \hat N_\emptyset)$ for the root nodes $\emptyset$, distributed according to $\pi^*$, and a sequence of i.i.d.~random vectors $\{(Q_{\bf i}, N_{\bf i}, C_{\bf i}, \hat Q_{\bf i}, \hat N_{\bf i}, \hat C_{\bf i}) )\}_{{\bf i} \in U, {\bf i} \neq \emptyset}$, independent of $(Q_\emptyset, N_\emptyset, \hat Q_{\emptyset}, \hat N_\emptyset)$, distributed according to $\pi$ for all other nodes.

Next, for ${\bf i} \in \mathbb{N}^k_+$, $k \geq 1$, let $B^{(0)}_{\bf i} = C_{\bf i} Q_{\bf i}$, $\hat B^{(0)}_{\bf i} = \hat C_{\bf i} \hat Q_{\bf i}$, $B^{(j)}_{\bf i} = C_{\bf i} 1(N_{\bf i} \geq i)$, and $\hat B^{(j)}_{\bf i} = \hat C_{\bf i} 1(\hat N_{\bf i} \geq i)$, for $j \geq 1$, and note that
\begin{align*}
\Pi_{\bf i} Q_{\bf i} &= Q_{\bf i} \prod_{r = 1}^k C_{{\bf i}|r} 1( i_r \leq N_{{\bf i}|r-1} )
= 1(i_1 \leq N_\emptyset) \prod_{r = 1}^{k-1} B^{(i_{r+1})}_{{\bf i}|r} B^{(0)}_{\bf i},
\end{align*}
and similarly,
$$\hat \Pi_{\bf i} \hat Q_{\bf i} = 1(i_1 \leq \hat N_\emptyset) \prod_{r = 1}^{k-1} \hat B^{(i_{r+1})}_{{\bf i}|r} \hat B^{(0)}_{\bf i},$$
with the convention that $\prod_{i=a}^b x_i \equiv 1$ if $b < a$.

Let $\mathcal{E}^* = E_{\pi^*} [ | \hat Q - Q| + |\hat N - N| ]$, where $(Q, N, \hat Q, \hat N)$ is distributed according to $\pi^*$, and $\mathcal{E} = E_\pi [ \sum_{i=0}^\infty | \hat B^{(i)} - B^{(i)} | ]$, where $(B^{(0)}, B^{(1)}, B^{(2)}, \dots, \hat B^{(0)}, \hat B^{(1)}, \hat B^{(2)}, \dots)$ is distributed according to $\pi$. It follows that
$$E \left[ \left| \hat W^{(0)} - W^{(0)} \right| \right] = E_{\pi^*} \left[ \left| \hat Q - Q \right| \right] \leq \mathcal{E}^*,$$
and for $j \geq 1$,
\begin{align*}
E\left[ \left| \hat W^{(j)} - W^{(j)} \right| \right] &= E\left[ \left|  \sum_{{\bf i} \in \mathbb{N}_+^j} 1(i_1 \leq \hat N_\emptyset) \prod_{r = 1}^{j-1} \hat B^{(i_{r+1})}_{{\bf i}|r} \hat B^{(0)}_{\bf i} - \sum_{{\bf i} \in \mathbb{N}_+^j} 1(i_1 \leq N_\emptyset) \prod_{r = 1}^{j-1} B^{(i_{r+1})}_{{\bf i}|r} B^{(0)}_{\bf i}   \right| \right] \\
&\leq \sum_{{\bf i} \in \mathbb{N}_+^j} E\left[ \left| 1(i_1 \leq \hat N_{\emptyset}) \prod_{r = 1}^{j-1} \hat B^{(i_{r+1})}_{{\bf i}|r} \left( \hat B^{(0)}_{\bf i} -  B^{(0)}_{\bf i} \right)  \right| \right] \\
&\hspace{5mm} + \sum_{{\bf i} \in \mathbb{N}_+^j} E\left[ \left| \left( 1(i_1 \leq \hat N_{\emptyset}) - 1(i_1 \leq  N_\emptyset)  \right)  \prod_{r = 1}^{j-1} \hat B^{(i_{r+1})}_{{\bf i}|r}  B^{(0)}_{\bf i}   \right| \right] \\
&\hspace{5mm} + \sum_{{\bf i} \in \mathbb{N}_+^j} E\left[ \left| 1(i_1 \leq N_{\emptyset}) \left(  \prod_{r = 1}^{j-1} \hat B^{(i_{r+1})}_{{\bf i}|r}  - \prod_{r = 1}^{j-1} B^{(i_{r+1})}_{{\bf i}|r} \right) B^{(0)}_{\bf i}   \right| \right] \\
&= \sum_{{\bf i} \in \mathbb{N}_+^j}  P(\hat N \geq i_1) \prod_{r=1}^{j-1} E\left[ |\hat C| 1 (\hat N \geq i_{r+1})  \right]  E_\pi \left[ |\hat B^{(0)} - B^{(0)}| \right] \\
&\hspace{5mm} + \sum_{{\bf i} \in \mathbb{N}_+^j} E_{\pi^*} \left[\left| 1(i_1 \leq \hat N) - 1(i_1 \leq N) \right| \right] \prod_{r=1}^{j-1} E\left[ |\hat C| 1 (\hat N \geq i_{r+1})  \right] E\left[ |CQ | \right] \\
&\hspace{5mm} +  \sum_{{\bf i} \in \mathbb{N}_+^j} P(N \geq i_1) E\left[ \left|  \prod_{r = 1}^{j-1} \hat B^{(i_{r+1})}_{{\bf i}|r}  - \prod_{r = 1}^{j-1} B^{(i_{r+1})}_{{\bf i}|r}    \right| \right] E[|CQ|],
\end{align*}
where we have used the independence among the generic branching vectors of the weighted branching trees. Moreover,
$$\sum_{{\bf i} \in \mathbb{N}_+^j}  P(\hat N \geq i_1) \prod_{r=1}^{j-1} E\left[ |\hat C| 1 (\hat N \geq i_{r+1})  \right] = \sum_{i=1}^\infty P(\hat N \geq i) \left( \sum_{k=1}^\infty E\left[ |\hat C| 1(\hat N \geq k) \right] \right)^{j-1}  = E[\hat N] \hat \rho^{j-1},$$
where $\hat \rho = E[\hat N |\hat C|]$. Similarly,
$$\sum_{i =1}^\infty E_{\pi^*} \left[\left| 1(i \leq \hat N) - 1(i \leq N) \right| \right]  = \sum_{i=1}^\infty E_{\pi^*} \left[ 1(N < i \leq \hat N) + 1(\hat N < i \leq N) \right] = E_{\pi^*} \left[ |\hat N - N| \right].$$

It follows that
\begin{align*}
E\left[ \left| \hat W^{(j)} - W^{(j)} \right| \right] &\leq E[\hat N] \hat \rho^{j-1} E_\pi \left[ |\hat B^{(0)} - B^{(0)}| \right]  + E[|CQ|]   \hat \rho^{j-1}  E_{\pi^*} \left[ |\hat N - N| \right]  \\
&\hspace{5mm} + E[N] E[|CQ|] \sum_{(i_2, i_3,\dots, i_j) \in \mathbb{N}_+^{j-1}} E\left[ \left|  \prod_{r = 1}^{j-1} \hat B^{(i_{r+1})}_{(1,i_2, \dots, i_r)}  - \prod_{r = 1}^{j-1} B^{(i_{r+1})}_{(1,i_2, \dots, i_r)}    \right| \right].
\end{align*}
To analyze the last expectation let $a_j = \sum_{(i_2, i_3,\dots, i_j) \in \mathbb{N}_+^{j-1}} E\left[ \left|  \prod_{r = 1}^{j-1} \hat B^{(i_{r+1})}_{(1,i_2, \dots, i_r)}  - \prod_{r = 1}^{j-1} B^{(i_{r+1})}_{(1,i_2, \dots, i_r)}    \right| \right]$ for $j \geq 2$, and $a_1 = 0$. It follows that for $j \geq 2$,
\begin{align*}
a_j &\leq \sum_{(i_2, i_3,\dots, i_j) \in \mathbb{N}_+^{j-1}} E\left[ \left|  \prod_{r = 1}^{j-2} \hat B^{(i_{r+1})}_{(1,i_2, \dots, i_r)}  - \prod_{r = 1}^{j-2} B^{(i_{r+1})}_{(1,i_2, \dots, i_r)}    \right| \left|  \hat B_{(1,i_2, \dots, i_{j-1})}^{(i_j)} \right|  \right] \\
&\hspace{5mm} + \sum_{(i_2, i_3,\dots, i_j) \in \mathbb{N}_+^{j-1}} E\left[ \left| \prod_{r = 1}^{j-2} B^{(i_{r+1})}_{(1,i_2, \dots, i_r)} \right| \left|  \hat B_{(1,i_2, \dots, i_{j-1})}^{(i_j)}  -  B^{(i_{j})}_{(1,i_2, \dots, i_{j-1})}    \right| \right] \\
&= a_{j-1} \sum_{i_j = 1}^\infty E\left[ |\hat C| 1(\hat N \geq i_j)  \right] + \sum_{(i_2, i_3,\dots, i_j) \in \mathbb{N}_+^{j-1}} \prod_{r=1}^{j-2} E\left[ |C| 1(N \geq i_{r+1}) \right] E_\pi \left[ \left| \hat B^{(i_j)} - B^{(i_j)} \right| \right] \\
&= \hat \rho \, a_{j-1} + \rho^{j-2} \sum_{i=1}^\infty E_\pi \left[ \left| \hat B^{(i)} - B^{(i)} \right| \right],
\end{align*}
where $\rho = E[N |C|]$. Iterating this recursion $j-2$ times gives
$$a_j \leq \hat \rho^{j-1} a_1 + \sum_{t = 0}^{j-2} \hat \rho^t \rho^{j-2-t} \sum_{i=1}^\infty E_\pi \left[ \left| \hat B^{(i)} - B^{(i)} \right| \right] = \sum_{t = 0}^{j-2} \hat \rho^t \rho^{j-2-t} \sum_{i=1}^\infty E_\pi \left[ \left| \hat B^{(i)} - B^{(i)} \right| \right] .$$

We conclude that for $j \geq 1$,
\begin{align*}
E\left[ \left| \hat W^{(j)} - W^{(j)} \right| \right] &\leq E[\hat N] \hat \rho^{j-1} E_\pi \left[ |\hat B^{(0)} - B^{(0)}| \right]  + E[|CQ|]   \hat \rho^{j-1}  E_{\pi^*} \left[ |\hat N - N| \right]  \\
&\hspace{5mm} + 1(j \geq 2) E[N] E[|CQ|] \sum_{t = 0}^{j-2} \hat \rho^t \rho^{j-2-t} \sum_{i=1}^\infty E_\pi \left[ \left| \hat B^{(i)} - B^{(i)} \right| \right] \\
&\leq \left( E[\hat N] \vee \frac{E[N] E[|CQ|]}{\rho} \right) \left( \sum_{t=0}^{j-1} \hat \rho^t \rho^{j-1-t} \right) \mathcal{E} + E[|CQ|]   \hat \rho^{j-1} \mathcal{E}^*.
\end{align*}
\end{proof}

\subsection{Convergence to the special endogenous solution}

We now proceed to prove the two main theorems of the paper, Theorems \ref{T.MainHomo} and \ref{T.MainNonHomo}.

\begin{proof}[Proof of Theorem \ref{T.MainHomo}]

{\it Case 1:} Weighted branching processes.

    Choose a coupling $\pi$ of $\mu_n$ and $\mu$ such that $E_\pi [ |Q-Q^{(n)}|+\sum_{j=1}^{\infty}|B_j-B^{(n)}_j| ] = d_1(\mu,\mu_n)$.
    If we construct both WBPs based on this optimal coupling, then by Proposition~\ref{P.CouplingWBP},
    \begin{align*}
        E \left[ \left| W^{(n,j)} - W^{(j)} \right| \right] &\le  \left(\rho_n^{j} + E\left[ \left|Q\right| \right]\sum_{t=0}^{j-1}\rho^t\rho_n^{j-1-t}\right)d_1(\mu,\mu_n) \\
        &\leq (E[|Q| \vee \rho) (j+1) \rho^{j-1} \left( 1 \vee \frac{\rho_n}{\rho} \right)^j d_1(\mu,\mu_n).
    \end{align*}

For fixed $j \geq 1$ note that $|\rho_n - \rho| \leq d_1(\mu, \mu_n)$, and hence $\left( 1 \vee (\rho_n/\rho) \right)^j \to 1$ as $n \to \infty$, which in turn implies that $E [ | W^{(n,j)} - W^{(j)} | ]\to 0$.

Assume now $Q^{(n)} = Q \equiv 1$, and $\{C_j^{(n)}, C_j\}$ are nonnegative for all $n,j$; suppose $j_n \to \infty$ and $j_n d_1(\mu, \mu_n) \to 0$ as $n \to \infty$. First note that $\{W^{(j)}/\rho^{j}\}$ is a mean one nonnegative martingale with respect to the filtration generated by $\mathcal{G}_j = \sigma( (B_{({\bf i},1)}, B_{({\bf i},2)}, \dots): {\bf i} \in \mathbb{N}_+^r, 0 \leq r < j)$, $\mathcal{G}_0 = \sigma(\varnothing)$. Therefore,
\begin{align*}
E\left[ \left| \frac{W^{(n, j_n)} }{\rho_n^{j_n}} - \frac{W^{(j_n)}}{\rho^{j_n}} \right| \right] &\leq  E\left[ \frac{1}{\rho_n^{j_n}}\left|W^{(n,j_n)} - W^{(j_n)} \right|  \right]+E\left[ \frac{W^{(j_n)}}{\rho^{j_n}} \left| \left(\frac{\rho}{\rho_n}\right)^{j_n}-1 \right|  \right] \\
&\leq \frac{(1 \vee \rho)}{\rho} (j_n+1) \left(\frac{\rho}{\rho_n} \right)^{j_n} \left( 1 \vee \frac{\rho_n}{\rho} \right)^{j_n} d_1(\mu,\mu_n) + \left| \left(\frac{\rho}{\rho_n}\right)^{j_n}-1 \right| \\
&\leq  \frac{(1\vee \rho)}{\rho} (j_n+1)  e^{j_n(\rho/\rho_n-1)^+} d_1(\mu,\mu_n) + j_n \left| \frac{\rho}{\rho_n} - 1 \right| e^{(j_n-1) (\rho/\rho_n - 1)^+},
\end{align*}
where in the last step we used the inequalities
\begin{equation} \label{eq:PowerBounds}
(x \vee 1)^j \leq e^{j(x-1)^+} \qquad \text{ and } \qquad |x^j -1| \leq j|x-1| e^{(j-1)(x-1)^+} \qquad \text{ for all } x > 0, j \in \mathbb{N}.
\end{equation}
 Since $j_n d_1(\mu,\mu_n) \to 0$ as $n \to \infty$, then so does $j_n |\rho/\rho_n - 1| \to 0$ as $n \to \infty$, and we conclude that the expected value converges to zero. Since by the martingale convergence theorem $W^{(j_n)}/\rho^{j_n} \to \mathcal{W}$ almost surely, then
 $$\frac{W^{(n, j_n)} }{\rho_n^{j_n}} \Rightarrow \mathcal{W}, \qquad n\to \infty.$$
If $E[\mathcal{W}] = 1$ then $E[ |W^{(j_n)}/\rho^{j_n} - \mathcal{W} | ] \to 0$ and we can replace the convergence in distribution to convergence in the Kantorovich-Rubinstein distance.

The last statement of the theorem for weighted branching processes follows from noting that
\begin{align*}
\frac{1}{\rho^j}  E\left[ \left| W^{(n,j)} - W^{(j)} \right| \right] &\leq   \left| \frac{1}{\rho^j} - \frac{1}{\rho_n^j} \right| E\left[ |W^{(n,j)}| \right] + E\left[ \left| \frac{W^{(n, j_n)} }{\rho_n^{j_n}} - \frac{W^{(j_n)}}{\rho^{j_n}} \right| \right] \\
&= \left| \left(\frac{\rho}{\rho_n}\right)^{j}-1 \right| + E\left[ \left| \frac{W^{(n, j_n)} }{\rho_n^{j_n}} - \frac{W^{(j_n)}}{\rho^{j_n}} \right| \right],
\end{align*}
which were already shown to converge to zero for all $0 \leq j \leq j_n$. This completes the case.

{\it Case 2:} Weighted branching trees.

    Construct versions of the processes $\{W^{(n,j)} : j \geq 0 \}$ and $\{ W^{(j)} : j \geq 0\}$ using a sequence of coupled vectors $\{ (Q^{(n)}_{\bf i}, N^{(n)}_{\bf i}, C^{(n)}_{\bf i}, Q_{\bf i}, N_{\bf i}, C_{\bf i})\}_{{\bf i} \in U, {\bf i} \neq \emptyset}$ according to the coupling $\pi$ satisfying $d_1(\mu_n,\mu) = E_\pi[ |C^{(n)} Q^{(n)} - CQ| + \sum_{i=1}^\infty | C^{(n)} 1(N^{(n)} \geq i) - C 1(N \geq i)| ]$. Let the root vector $(Q^{(n)}_\emptyset, N^{(n)}_\emptyset, Q_\emptyset, N_\emptyset)$ be distributed according to $\pi^*$, where $d_1(\nu_n^*, \nu^*) = E_{\pi^*} [ |Q^{(n)} - Q| + |N^{(n)} - N| ]$, and be independent of all other nodes.

By Proposition \ref{P.CouplingWBT} we have $E[ | W^{(n,0)} - W^{(0)} | ] \leq d_1(\nu_n^*, \nu^*)$ and
$$E\left[ \left| W^{(n,j)} -  W^{(j)} \right| \right] \leq K j (\rho_n \vee \rho)^{j-1} d_1(\mu_n, \mu) + K \rho_n^{j-1} d_1(\nu_n^*, \nu^*), \qquad j \geq 1,$$
with $K = \max\{ E[N^{(n)}], E[|Q|] \}$. Note that
$$|\rho_n - \rho|  = \left| \sum_{i=1}^\infty E\left[ C^{(n)} 1(N^{(n)} \geq i) - C 1(N \geq i) \right] \right| \leq d_1(\mu_n,\mu).$$
The result for fixed $j$ follows immediately.

Assume now that $Q^{(n)} = Q = 1$, and $\{ C^{(n)}, C\}$ are nonnegative, and recall that $C$ is independent of $(Q, N)$, and therefore $\mu$ defines a weighted branching process. This in turn implies that $\{ W^{(j)}/\rho^j\}$ is a nonnegative martingale with respect to the filtration generated by $\mathcal{H}_j = \sigma( (N_{\bf i}, C_{({\bf i},1)}, \dots, C_{({\bf i}, N_{\bf i})}): {\bf i} \in A_r, 0 \leq r < j)$, $\mathcal{H}_0 = \sigma(\varnothing)$. It follows that
    \begin{align*}
E\left[\left| \frac{W^{(n,j_n)}}{\rho_n^{j_n}} - \frac{W^{(j_n)}}{\rho^{j_n}} \right| \right] &\leq E\left[  \left| W^{(n,j_n)} - W^{(j_n)} \right| \right]  \frac{1}{\rho_n^{j_n}} + \left| \left( \frac{\rho}{\rho_n} \right)^{j_n} - 1  \right|  \\
&\leq K j_n  \left( \frac{\rho \vee \rho_n}{\rho_n} \right)^{j_n} d_1(\mu_n, \mu) + \frac{K}{\rho_n} d_1(\nu_n^*, \nu^*) + \left| \left( \frac{\rho}{\rho_n} \right)^{j_n} - 1  \right| \\
&\leq K  j_n e^{j_n (\rho/\rho_n - 1)^+} d_1(\mu_n, \mu) + \frac{K}{\rho_n} d_1(\nu_n^*, \nu^*) + j_n \left| \frac{\rho}{\rho_n} - 1 \right| e^{(j_n-1)(\rho/\rho_n-1)^+},
\end{align*}
where in the last step we used the inequalities \eqref{eq:PowerBounds}. This last expression converges to zero since $j_n d_1(\mu_n, \mu) \to 0$ as $n \to \infty$.

The proof of the last statement is identical to that of {\it Case 1} and is therefore omitted.
\end{proof}

\bigskip

We now proceed to the nonhomogeneous case.

\begin{proof}[Proof of Theorem~\ref{T.MainNonHomo}]

{\it Case 1:} Weighted branching processes.

The result for fixed $k$ follows from Theorem \ref{T.MainNonHomo}, since
$$\left| R^{(n,k)} - R^{(k)} \right| =  \left| \sum_{j=0}^k \left( W^{(n,j)} - W^{(j)} \right) \right| \leq \sum_{j=0}^k \left| W^{(n,j)} - W^{(j)} \right|.$$
If in addition we have $\rho<1$, then, by Proposition \ref{P.CouplingWBP} (using the optimal coupling),
\begin{align*}
    E\left[\left| R^{(n,k_n)}-R^{(k_n)}\right| \right]&\le \sum_{j=0}^{k_n}E\left[ \left|W^{(n,j)}-W^{(j)}\right| \right] \le \sum_{j=0}^{k_n}\left(\rho_n^{j} + E\left[ \left|Q\right| \right]\sum_{t=0}^{j-1}\rho^t\rho_n^{j-1-t}\right)d_1(\mu_n,\mu).
\end{align*}
Now note that since $|\rho_n - \rho| \leq d_1(\mu_n, \mu)$, then for any $0 < \varepsilon < 1-\rho$ we have that $\rho_n < 1 - \varepsilon$ for all $n$ sufficiently large. In this case,
\[
    E\left[\left| R^{(n,k_n)}-R^{(k_n)}\right| \right]
    \le \sum_{j=0}^{\infty}\left((1-\varepsilon)^{j} + E\left[ \left|Q\right| \right] j (1-\varepsilon)^{j-1} \right)d_1(\mu_n,\mu)\to 0,
\]
as $n \to \infty$ for any $k_n \geq 1$. Since we also have that
$$E\left[ \left| R^{(k_n)} - R \right| \right] = E\left[ \left| \sum_{j = k_n+1}^\infty W^{(j)} \right| \right] \leq \sum_{j=k_n+1}^\infty E[|Q|] \rho^j = \frac{E[|Q|] \rho^{k_n+1}}{1-\rho},$$
then for any $k_n \to \infty$,
$$R^{(n,k_n)} \stackrel{d_1}{\longrightarrow} R, \qquad n \to \infty.$$

{\it Case 2:} Weighted branching trees.

The proof of the result for fixed $k$ follows from Theorem \ref{T.MainHomo} as before. For $k_n$ and $\rho < 1$ we use Proposition \ref{P.CouplingWBT} (using the optimal couplings $\pi^*$ and $\pi$) to obtain
\begin{align*}
E\left[\left| R^{(n,k_n)}-R^{(k_n)}\right| \right] &\le \sum_{j=0}^{k_n}E\left[ \left|W^{(n,j)}-W^{(j)}\right| \right] \\
&\leq d_1(\nu_n^*, \nu^*) + K \sum_{j=1}^{k_n} \left( \sum_{t=0}^{j-1} \rho_n^t \rho^{j-1-t} d_1(\mu_n,\mu) + \rho_n^{j-1} d_1(\nu_n^*, \nu^*) \right),
\end{align*}
with $K = \max\{ E[N^{(n)}], E[|Q|]\}$. Using the same arguments from {\it Case 1} note that for any $0 < \varepsilon < 1-\rho$ and $n$ sufficiently large,
\begin{align*}
E\left[\left| R^{(n,k_n)}-R^{(k_n)}\right| \right] &\le d_1(\nu_n^*, \nu^*) + K \sum_{j=1}^\infty \left( j (1-\varepsilon)^{j-1}   d_1(\mu_n,\mu) + (1-\varepsilon)^{j-1} d_1(\nu_n^*, \nu^*) \right) \to 0,
\end{align*}
as $n \to \infty$ for any $k_n \geq 1$. The rest of proof is the same as that of {\it Case 1} and is therefore omitted.
\end{proof}
\bigskip

The last result we need to prove in this section is Lemma \ref{L.FinalConditionsWBT}.

\begin{proof}[Proof of Lemma \ref{L.FinalConditionsWBT}]
From the definition of the Kantorovich-Rubinstein metric and the fact that the infimum is always attained (see, e.g., \cite{Villani_2009}, Theorem 4.1), there exists a coupling $\pi$ of $(N^{(n)}, Q^{(n)}, C^{(n)}, N, Q, C)$ such that
\begin{equation} \label{eq:OptimalCoupling}
d_1(\nu_n, \nu) =  E_\pi \left[ | Q^{(n)} - Q| + |N^{(n)} - N| + | C^{(n)} - C| \right].
\end{equation}
Next, define the vectors
$${\bf Y}_n = C^{(n)} (Q^{(n)}, 1(N^{(n)} \geq 1), 1(N^{(n)} \geq 2), \dots) \qquad \text{and} \qquad {\bf Y} = C (Q, 1(N \geq 1),  1(N \geq 2), \dots).$$

We will first show that $\| {\bf Y}_n - {\bf Y} \|_1 \stackrel{P}{\to} 0$ as $n \to \infty$. To this end, let $(\hat Q, \hat N, \hat C) = (Q^{(n)}, N^{(n)}, C^{(n)})$ to simplify the notation and define $X_n = \| (N^{(n)}, Q^{(n)}, C^{(n)}) - (N, Q, C) \|_1$. Note that \eqref{eq:OptimalCoupling} implies that $X_n \to 0$ in mean, and therefore in probability. Now note that
\begin{align*}
\left\| {\bf Y}_n - {\bf Y} \right\|_1 &=  |\hat Q \hat C - Q C| + \sum_{i=1}^\infty | \hat C 1(\hat N \geq i) - C 1(N \geq i)|  \\
&=  |\hat Q \hat C - Q C| + \sum_{i=1}^\infty \left( |\hat C - C| 1(i \leq \hat N \wedge N) + |\hat C| 1(N < i \leq \hat N) + |C| 1(\hat N < i \leq N) \right)  \\
&=  |\hat Q \hat C - Q C| +  |\hat C - C| (\hat N \wedge N) + |\hat C| (\hat N - N)^+ + |C| (N - \hat N)^+ \\
&\leq |\hat C| |\hat Q - Q| + |Q| |\hat C - C| + |\hat C - C| (\hat N \wedge N) + |\hat C| (\hat N - N)^+ + |C| (N - \hat N)^+ \\
&\leq \left(2 |C^{(n)}| + |Q| + N + |C| \right) X_n \stackrel{P}{\to} 0, \qquad n \to \infty,
\end{align*}
by the converging together lemma. It remains to show that $\left\| {\bf Y}_n - {\bf Y} \right\|_1 \to 0$ in mean.

By the triangle's inequality we have that
$$Q_n \triangleq \left| \left\| {\bf Y}_n \right\|_1 - \left\|  {\bf Y} \right\|_1 \right| \leq \left\| {\bf Y}_n - {\bf Y} \right\|_1 \stackrel{P}{\to} 0, \qquad n \to \infty.$$
Also, by assumption,
\begin{align*}
E\left[ \| {\bf Y}_n \|_1 \right] &= E\left[ |\hat Q\hat C| + \sum_{i=1}^\infty |\hat C| 1(\hat N \geq i) \right] =  E\left[ |\hat Q \hat C|+ | \hat C | \hat N \right] \to E[ |CQ| + |C|N] = E\left[ \| {\bf Y} \|_1 \right]
\end{align*}
as $n \to \infty$, and therefore $E[Q_n] \to 0$ (see, e.g., Theorem 5.5.2 in \cite{Durrett_2010}). Now note that since $\left\| {\bf Y}_n - {\bf Y} \right\|_1 \leq \left\| {\bf Y}_n \right\|_1 + \left\|  {\bf Y} \right\|_1 \leq Q_n + 2 \left\|  {\bf Y} \right\|_1$, we have
\begin{align*}
E \left[ \left\| {\bf Y}_n - {\bf Y} \right\|_1  \right] \leq E \left[ \left\| {\bf Y}_n - {\bf Y} \right\|_1 1( Q_n \leq 1)  \right]  + E[Q_n] + 2 E\left[  \| {\bf Y} \|_1 1( Q_n > 1) \right],
\end{align*}
where $\left\| {\bf Y}_n - {\bf Y} \right\|_1 1( Q_n \leq 1)$ and $\| {\bf Y} \|_1 1( Q_n > 1)$ are uniformly integrable by Theorem 13.3 in \cite{Williams_1991}, and hence
$$\lim_{n \to \infty} E \left[ \left\| {\bf Y}_n - {\bf Y} \right\|_1 1( Q_n \leq 1)  \right] = \lim_{n \to \infty} E\left[  \| {\bf Y} \|_1 1( Q_n > 1) \right] = 0.$$
\end{proof}

\subsection{Applications}

In this last section of the paper we prove the theorems regarding our applications to the analysis of information ranking algorithms and random graphs.

\begin{proof}[Proof of Theorem \ref{T.MainWBTExt}]
By applying the same steps from the proof of Theorem~\ref{T.MainNonHomo} conditionally on the sigma-algebra $\mathscr{F}_n$ (with $\nu_n^*$ the probability distribution of the root vector, which is allowed to be different) we obtain
\begin{align*}
E\left[ \left. \left| R^{(n,k)}-R^{(k)}\right| \right| \mathscr{F}_n \right] &\le \sum_{j=0}^{k} E\left[ \left. \left|W^{(n,j)}-W^{(j)}\right| \right| \mathscr{F}_n \right] \\
&\leq d_1(\nu_n^*, \nu^*) + K_n \sum_{j=1}^{k} \left( \sum_{t=0}^{j-1} \rho_n^t \rho^{j-1-t} d_1(\mu_n,\mu) + \rho_n^{j-1} d_1(\nu_n^*, \nu^*) \right),
\end{align*}
where $K_n = \max\{ E[N_\emptyset^{(n)} | \mathscr{F}_n], E[|Q|]\}$. Next, for any $\varepsilon > 0$ define the event $A_{n,\varepsilon} = \{ d_1(\nu_n^*, \nu^*) + d_{1}(\mu_n,\mu)\le \varepsilon \}$. Recall that $\rho_n \leq \rho + d_1(\mu_n,\mu)$ and note that $K_n \leq \max\{ E[N_\emptyset] + d_1(\nu_n^*, \nu^*), E[|Q|]\}$. The result for fixed $k$ is obtained by first conditioning on the event $A_{n,\varepsilon}$, as will be done for the case of $k_n \to \infty$ below, and therefore we omit the details.

For $k_n \to \infty$ and $\rho < 1$, choose $0 < \varepsilon \leq (1-\rho)/2$ and define $A_{n,\varepsilon}$ as above; set $K = \max\{ E[N_\emptyset] + \varepsilon, E[|CQ|]\}$. Then apply Markov's inequality conditionally to obtain that for any $\delta > 0$,
\begin{align*}
P\left( \left| R^{(n,k_n)}-R^{(k_n)}\right| > \delta \right) &\le P\left( \left| R^{(n,k_n)}-R^{(k_n)}\right| > \delta, \, A_{n,\varepsilon} \right)  +P\left( A_{n,\varepsilon}^c \right) \\
&\leq \delta^{-1} E\left[ 1(A_{n, \varepsilon} ) E\left[ \left. \left| R^{(n,k_n)}-R^{(k_n)}\right| \right| \mathscr{F}_n \right]  \right]  + P\left( A_{n, \varepsilon}^c \right) \\
&\leq \delta^{-1} E\left[ 1(A_{n,\varepsilon}) \left\{ d_1(\nu_n^*, \nu^*) +  K_n \sum_{j=1}^\infty (1-\varepsilon)^{j-1} d_1(\nu_n^*, \nu^*)  \right\} \right]  \\
&\hspace{5mm} + \delta^{-1}  E\left[ 1(A_{n,\varepsilon}) K_n \sum_{j=1}^{\infty}  j (1-\varepsilon)^{j-1} d_1(\mu_n, \mu) \right]  + P\left( A_{n, \varepsilon}^c \right) \\
&\leq \delta^{-1} (1 \vee K/\varepsilon^2) E\left[ d_1(\nu_n^*, \nu^*) \wedge \varepsilon + d_1(\mu_n, \mu) \wedge \varepsilon \right] + P(A_{n,\varepsilon}^c).
\end{align*}
The assumption that $d_1(\nu_n^*, \nu^*) + d_1(\mu_n, \mu) \stackrel{P}{\to} 0$ as $n \to \infty$ and the bounded convergence theorem show that $| R^{(n,k_n)} - R^{(k_n)} | \stackrel{P}{\to} 0$. This combined with the observation that $| R^{(k_n)} - \mathcal{R} | \to 0$ almost surely complete the proof.
\end{proof}

\bigskip

We now prove the theorem for the analysis of the configuration model.

\begin{proof}[Proof of Theorem \ref{T.MainWBThomo}]
Let $F_n(k) = P(N \leq k | \mathscr{F}_n)$, $G_n(k) = P(N_\emptyset \leq k | \mathscr{F}_n)$, $F(k) = P(N \leq k)$ and $G(k) = P(N_\emptyset \leq k)$. Let $\{U_{\bf i}\}_{{\bf i} \in U}$ be a sequence of i.i.d.~uniform (0,1) random variables, independent of $\mathscr{F}_n$. Construct the two trees using the coupled vectors $\{ (N_{\bf i}^{(n)}, N_{\bf i})\}_{{\bf i} \in U}$ where
$$(N^{(n)}_\emptyset, N_\emptyset) = (G_n^{-1}(U_\emptyset), G^{-1}(U_\emptyset)) \qquad \text{and} \qquad (N^{(n)}_{\bf i}, N_{\bf i}) = (F_n^{-1}(U_{\bf i}), F^{-1}(U_{\bf i}) ), \qquad {\bf i} \neq \emptyset,$$
where $f^{-1}(t) = \inf\{x \in \mathbb{R}: f(x) \geq t\}$. These are the optimal couplings for which $d_1(\nu_n^*, \nu^*)$ and $d_1(\nu_n, \nu)$ are achieved.

Next, by adapting Proposition \ref{P.CouplingWBT} to allow $\nu_n^*$ to be different, we obtain for $j \geq 1$,
\begin{align*}
E\left[ \left. \left| Z^{(n,j)} - Z^{(j)} \right| \right| \mathscr{F}_n \right] &\leq m_n^{j-1} d_1(\nu_n^*, \nu^*) + 1(j \geq 2) \, m^* \sum_{t=0}^{j-2} m_n^t m^{j-2-t} d_1(\nu_n,\nu) ,
\end{align*}
where $m = E[N]$, $m_n = E[N^{(n)} | \mathscr{F}_n]$, $m^* = E[N_\emptyset]$, and $m_n^* = E[ N^{(n)}_\emptyset | \mathscr{F}_n]$.

Note that $M^{(n,j)} = Z^{(n,j)}/(m_n^* m_n^{j-1}) - Z^{(j)}/(m^* m^{j-1})$, for $j \geq 1$, is a martingale with respect to the filtration $\mathcal{G}_j = \sigma( N_{\bf i}^{(n)}, N_{\bf i}: {\bf i} \in \mathbb{N}^s, 0 \leq s \leq j-1)$, conditionally on $\mathscr{F}_n$. Let $A_{n,\delta} = \{ d_1(\nu_n^*, \nu^*) + j_n d_1(\nu_n, \nu) \leq \delta \}$ and note that by assumption $P(A_{n,\delta}) \to 0$ for any $\delta > 0$. Now let $\varepsilon > 0$ and use Doob's maximal inequality to obtain
  \begin{align*}
&P\left(  \max_{1 \leq j \leq j_n} \left| M^{(n,j)} \right| > \varepsilon , \, A_{n,\delta}  \right) \leq \frac{1}{\varepsilon} E\left[ 1(A_{n,\delta}) E\left[\left. \left|M^{(n,j_n)}\right| \right| \mathscr{F}_n \right] \right] .
\end{align*}
 To bound this last expectation note that for any $j \geq 1$,
  \begin{align*}
  E\left[\left. \left|M^{(n,j)}\right| \right| \mathscr{F}_n \right] &\leq  E\left[\left. \frac{1}{m^* m^{j-1}} \left| Z^{(n,j)} - Z^{(j)} \right| + \left| \frac{m_n^* m_n^{j-1}}{m^* m^{j-1}} - 1 \right| \frac{Z^{(n,j)}}{m_n^* m_n^{j-1}}  \right| \mathscr{F}_n \right] \\
  &= \frac{1}{m^* m^{j-1}}  E\left[ \left.\left| Z^{(n,j)}-Z^{(j)}\right|\right|\mathscr F_n \right] + \left| \frac{m_n^* m_n^{j-1}}{m^* m^{j-1}} - 1 \right| \\
     &\leq \frac{1}{m^*} \left( \frac{m_n}{m} \right)^{j-1} d_1(\nu_n^*, \nu^*) + 1(j \geq 2) \frac{1}{m}  \sum_{t=0}^{j-2} \left( \frac{m_n}{m} \right)^t  d_1(\nu_n,\nu) + \left| \frac{m_n^* m_n^{j-1}}{m^* m^{j-1}} - 1 \right| \\
     &\leq \frac{1}{m \wedge m^*} \left( \frac{m \vee m_n}{m} \right)^{j-1} \left( d_1(\nu_n^*, \nu^*) + j d_1(\nu_n,\nu) \right) + \left| \frac{m_n^* m_n^{j-1}}{m^* m^{j-1}} - 1 \right| .
  \end{align*}
  Moreover, by \eqref{eq:PowerBounds} we have that for $1 \leq j \leq j_n$ and on the event $A_{n,\delta}$,
  $$\left( \frac{m \vee m_n}{m} \right)^{j-1} \leq e^{(j-1) (m_n/m -1)^+} \leq e^{(j-1) d_1(\nu_n, \nu)/m} \leq e^{\delta/m}$$
  and
  \begin{align*}
\left| \frac{m_n^* m_n^{j-1}}{m^* m^{j-1}} - 1 \right| & \leq \frac{m_n^*}{m^*} \left| \left( \frac{m_n}{m} \right)^{j-1} - 1 \right| + \left| \frac{m_n^*}{m^*} - 1 \right| \\
&\leq \frac{m_n^*}{m^*} (j-1) \left| \frac{m_n}{m} - 1 \right| e^{(j-2)^+ (m_n/m - 1)^+} + \left| \frac{m_n^*}{m^*} - 1 \right| \\
&\leq \frac{(m^* + \delta) e^{\delta/m}}{m^* m} \cdot (j-1) d_1(\nu_n, \nu)  + \frac{1}{m^*} d_1(\nu_n^*, \nu^*).
\end{align*}
It follows that on the event $A_{n,\delta}$,
\begin{align*}
E\left[\left. \left|M^{(n,j_n)}\right| \right| \mathscr{F}_n \right] &\leq \left( \frac{e^{\delta/m}}{m \wedge m^*} + \frac{1}{m^*} \right)  d_1(\nu_n^*, \nu^*) +    \left( \frac{e^{\delta/m}}{m\wedge m^*}   + \frac{(m^* + \delta) e^{\delta/m}}{m^* m} \right)  j_n d_1(\nu_n, \nu).
\end{align*}
Hence,
$$P\left(  \max_{1 \leq j \leq j_n} \left| M^{(n,j)} \right| > \varepsilon , \, A_{n,\delta}  \right) \leq \frac{K}{\varepsilon} E\left[ 1(A_{n,\delta}) \left( d_1(\nu_n^*, \nu^*) + j_n d_1(\nu_n, \nu) \right) \right],$$
for some constant $K = K(\delta) < \infty$. Since $d_1(\nu_n^*, \nu^*) + j_n d_1(\nu_n, \nu) \stackrel{P}{\to} 0$ as $n \to \infty$, the bounded convergence theorem gives
\begin{equation} \label{eq:MGconvergence}
\max_{1\leq j \leq j_n} \left| M^{(n,j)} \right| \stackrel{P}{\longrightarrow} 0, \qquad n \to \infty.
\end{equation}

For the second statement of the theorem note that for any $j \geq 1$,
\begin{align*}
 \frac{\left| Z^{(n,j)} - Z^{(j)} \right|}{m^{j-1}} &\leq  m^* \left| M^{(n,j)} \right| + m^* \left| \frac{m_n^* m_n^{j-1}}{m^* m^{j-1}} - 1 \right| X^{(n,j)},
\end{align*}
where $X^{(n,j)} = Z^{(n,j)}/(m_n^* m_n^{j-1})$ is a mean one nonnegative martingale conditionally on $\mathscr{F}_n$. By \eqref{eq:MGconvergence}, it only remains to show that $\max_{1 \leq j \leq j_n} | \frac{m_n^* m_n^{j-1}}{m^* m^{j-1}} - 1 | X^{(n,j)} \stackrel{P}{\to} 0$ as $n \to \infty$. To this end note that for $A_{n,\delta}$ defined above and some constant $H = H(\delta) < \infty$,
\begin{align*}
P\left( \max_{1 \leq j \leq j_n} \left| \frac{m_n^* m_n^{j-1}}{m^* m_n^{j-1}} - 1 \right| X^{(j)} > \varepsilon, \, A_{n,\delta} \right) &\leq P\left( H \left( j_n d_1(\nu_n, \nu) + d_1(\nu_n^*, \nu^*) \right) \max_{1\leq j \leq j_n} X^{(n,j)} > \varepsilon, \ A_{n,\delta} \right) \\
&\leq \frac{H}{\varepsilon} E\left[ 1(A_{n,\delta}) \left( j_n d_1(\nu_n, \nu) + d_1(\nu_n^*, \nu^*) \right) E[ X^{(n,j_n)} | \mathscr{F}_n] \right] \\
&=  \frac{H}{\varepsilon} E\left[ 1(A_{n,\delta}) \left( j_n d_1(\nu_n, \nu) + d_1(\nu_n^*, \nu^*) \right) \right] \to 0,
\end{align*}
as $n \to \infty$, where in the second step we used Doob's maximal inequality conditionally on $\mathscr{F}_n$. This completes the proof.
\end{proof}

\bigskip

The last proof verifies the conditions of Theorem \ref{T.MainWBThomo} for the size-biased empirical distribution.

\begin{proof}[Proof of Lemma \ref{L.SizeBiased}]
To analyze $d_1(\nu_n^*, \nu^*)$ define $F(k) = \sum_{i=0}^k f(i)$ and let $F_n(k)$ denote the empirical distribution function of $F$. Then,
\begin{align*}
d_1(\nu_n^*, \nu^*) &= \sum_{k=0}^\infty |F_n(k) - F(k)| = \sum_{k=0}^\infty \left| \frac{1}{n} \sum_{i=1}^n 1(D_i > k) - P(D > k) \right| = \frac{1}{n} \sum_{k=0}^\infty \left| S_n^*(k) \right|,
\end{align*}
where $S_n^*(k) = Y_{k,1} + \dots + Y_{k,n}$ and $Y_{k,i} = 1(D_i > k) - P(D > k)$. Hence, we need to show that
$$\frac{1}{n^{1-\delta'}} \sum_{k=0}^\infty |S_n^*(k)| \stackrel{P}{\longrightarrow} 0,$$
as $n \to \infty$ for any $0 < \delta' < 1/2$. This follows from noting that for any $\varepsilon > 0$,
\begin{align*}
P\left( \sum_{k=0}^\infty |S_n^*(k)| > \varepsilon n^{1-\delta'} \right) &\leq \frac{1}{\varepsilon n^{1-\delta'}} \sum_{k=0}^\infty E[|S_n^*(k)|] \leq \frac{1}{\varepsilon n^{1-\delta'}} \sum_{k=0}^{\infty} \left( E[ (S_n^*(k))^2 ] \right)^{1/2} \\
&= \frac{1}{\varepsilon n^{1-\delta'}} \sum_{k=0}^{\infty} \left( n \var(Y_{k,1}) ] \right)^{1/2} \leq \frac{1}{\varepsilon n^{1/2-\delta'}} \sum_{k=0}^{\infty} \left( P(D > k)   \right)^{1/2} \\
&\leq  \frac{1}{\varepsilon n^{1/2-\delta'}} \left(1 + \sum_{k=1}^{\infty} \left( \frac{E[D^{2+\epsilon}]}{k^{2+\epsilon} }  \right)^{1/2} \right) \to 0,
\end{align*}
as $n \to \infty$, where in the second inequality we used the monotonicity of the $L_p$-norm and in the last one Markov's inequality.

Similarly, if we let $G_n(k) = \sum_{i=1}^n D_i 1(D_i \leq k+1)/L_n$ and $G(k) = E[D 1(D \leq k+1)]/E[D]$, for $k = 0, 1, 2, \dots$, then,
\begin{align*}
d_1(\nu_n, \nu) &= \sum_{k=0}^\infty  |G_n(k) - G(k)| = \sum_{k=0}^\infty \left| \sum_{i=1}^n \frac{D_i}{L_n} 1(D_i > k+1) - \frac{E[D 1(D > k+1)]}{E[D]} \right| \\
&\leq \sum_{k=0}^\infty \left| \sum_{i=1}^n D_i 1(D_i > k+1) \left( \frac{1}{L_n} - \frac{1}{nE[D]} \right)  \right| \\
&\hspace{5mm} + \frac{1}{E[D]} \sum_{k=0}^\infty \left| \frac{1}{n} \sum_{i=1}^n D_i 1(D_i > k+1) - E[D 1(D > k+1)] \right| \\
&\leq \frac{|nE[D] - L_n|}{L_n} \cdot \frac{1}{nE[D]} \sum_{k=0}^\infty \sum_{i=1}^n D_i 1(D_i > k+1) \\
&\hspace{5mm} + \frac{1}{n E[D]} \sum_{k=0}^\infty \left| \sum_{i=1}^n \left(D_i 1(D_i > k+1) - E[D 1(D> k+1)] \right) \right| \\
&= \frac{|nE[D] - L_n|}{n} \cdot \frac{n}{L_n} \cdot \frac{1}{nE[D]} \sum_{i=1}^n D_i (D_i-1)^+  + \frac{1}{nE[D]} \sum_{k=0}^\infty |S_n(k)|,
\end{align*}
where $S_n(k) = X_{k,1} + \dots + X_{k,n}$ and $X_{k,i} = D_i 1(D_i > k+1) - E[D 1(D > k+1)]$. By the weak law of large numbers
$$\frac{n}{L_n} \cdot \frac{1}{nE[D]} \sum_{i=1}^n D_i(D_i-1)^+ \stackrel{P}{\longrightarrow} \frac{E[D (D-1)^+]}{(E[D])^2}$$
as $n \to \infty$, therefore it suffices to show that
$$\frac{|nE[D] - L_n|}{n^{1-\delta}} \stackrel{P}{\longrightarrow} 0 \qquad \text{and} \qquad \frac{1}{n^{1-\delta}} \sum_{k=0}^\infty |S_n(k)| \stackrel{P}{\longrightarrow} 0$$
for $0 < \delta < \min\{1/2, \epsilon/(2+\epsilon)\}$. Since $E[D^{2+\epsilon}] < \infty$, the Marcinkiewicz-Zygmund Strong Law gives that,
$$\frac{(n E[D] - L_n)}{n^{1/p}} = \frac{1}{n^{1/p}} \sum_{i=1}^n (D_i - E[D]) \to 0 \qquad \text{a.s.,}$$
as $n \to \infty$ for any $1 \leq p < 2$, in particular, for $1/p = 1-\delta$. For the limit involving $S_n(k)$ let $a_n = \lfloor n^{1/(2+\epsilon)} \rfloor$ and follow the same steps used to analyze $S_n^*(k)$ to obtain
\begin{align*}
P\left( \sum_{k=0}^\infty |S_n(k)| > \varepsilon n^{1-\delta} \right) &\leq \frac{1}{\varepsilon n^{1-\delta}} \sum_{k=0}^{a_n-1} \left( E[(S_n(k))^2 ] \right)^{1/2} + \frac{1}{\varepsilon n^{1-\delta}} \sum_{k=a_n}^{\infty} E[|S_n(k)| ] \\
&\leq \frac{1}{\varepsilon n^{1/2-\delta}} \sum_{k=0}^{a_n-1} \left( \var(X_{k,1}) \right)^{1/2} + \frac{n^\delta}{\varepsilon} \sum_{k=a_n}^\infty E[|X_{k,1}|] \\
&\leq \frac{1}{\varepsilon n^{1/2-\delta}} \sum_{k=0}^{a_n-1} \left( E[ D^2 1(D > k+1) ] \right)^{1/2} + \frac{n^\delta}{\varepsilon} \sum_{k=a_n}^\infty 2 E[D 1(D > k+1)].
\end{align*}
Using the inequality
\begin{equation} \label{eq:MyMarkov}
E[D^{2+\epsilon}] \geq E[D^{2+\epsilon} 1(D > r)] \geq r^{2+\epsilon-t} E[D^t 1(D > r)],
\end{equation}
for any $r \geq 1$ and any $t \in [0, 2+\epsilon]$, gives that
\begin{align*}
P\left( \sum_{k=0}^\infty |S_n(k)| > \varepsilon n^{1-\delta} \right) &= O\left( \frac{1}{n^{1/2-\delta}} \sum_{k=0}^{a_n-1} \frac{1}{(k+1)^{\epsilon/2}} + n^\delta \sum_{k=a_n}^\infty \frac{1}{(k+1)^{1+\epsilon}}  \right) \\
&= O\left( \frac{a_n^{1-\epsilon/2} 1(\epsilon \neq 2) + (\log a_n) 1(\epsilon = 2)}{n^{1/2-\delta}} + n^\delta a_n^{-\epsilon} \right) \\
&= O\left( n^{\delta - \epsilon/(2+\epsilon)} + n^{\delta - 1/2} (\log n) 1(\epsilon = 2) \right)
\end{align*}
as $n \to \infty$, which converges to zero for $\delta < \min\{1/2, \epsilon/ (2+\epsilon) \}$.
\end{proof}

\section*{Acknowledgements}
We would like to thank an anonymous referee for his or her comments, which were very helpful in improving the presentation and scope of the results.

\bibliographystyle{plain}

\end{document}